\documentclass{amsart}
\usepackage{amsfonts,amssymb,amscd,amsmath,enumerate,verbatim,calc}
\usepackage[all]{xy}
\usepackage{hyperref}

\newcommand{\CM}{Cohen-Macaulay }

\newcommand{\wrt}{with respect to}

\newcommand{\n}{\mathfrak{n} }
\newcommand{\m}{\mathfrak{m} }

\newcommand{\g}{\mathfrak{gr} }

\providecommand{\f}{{\mathcal F}}
\providecommand{\D}{{\mathcal D}}
\newcommand{\Z}{\mathbb{Z} }

\newcommand{\QQ}{\mathbb{Q} }

\newcommand{\K}{\mathbb{K}_{\bullet} }

\newcommand{\rt}{\rightarrow}
\newcommand{\lrt}{\longrightarrow}
\newcommand{\ov}{\overline}

\newcommand{\Ass}{\operatorname{Ass}}

\newcommand{\hgt}{\operatorname{height}}

\newcommand{\Supp}{\operatorname{Supp}}
\providecommand\Spec{\text{\rm Spec}}

\newcommand{\projdim}{\operatorname{projdim}}
\newcommand{\injdim}{\operatorname{injdim}}

\newcommand{\Hom}{\operatorname{Hom}}

\newcommand{\Tor}{\operatorname{Tor}}

\theoremstyle{plain}

\newtheorem{theorem}{Theorem}[section]
\newtheorem{corollary}[theorem]{Corollary}
\newtheorem{lemma}[theorem]{Lemma}
\newtheorem{proposition}[theorem]{Proposition}

\theoremstyle{definition}
\newtheorem{definition}[theorem]{Definition}

\newtheorem{remark}[theorem]{Remark}
\newtheorem{example}[theorem]{Example}

\theoremstyle{remark}
\newtheorem{claim*}{\it Claim:}

\begin{document}

\title[Graded components local cohomology modules]{Graded components of local cohomology modules over invariant rings-II}
\author{Tony.~J.~Puthenpurakal}
\date{\today}
\address{Department of Mathematics, IIT Bombay, Powai, Mumbai 400 076}
\email{tputhen@math.iitb.ac.in}
\subjclass{Primary 13D45  Secondary 13A50}
\keywords{local comohology, graded local cohomology, invariant rings,  ring of differential operators}
 \begin{abstract}
Let $A$ be a regular ring containing a field $K$ of characteristic zero and let $R = A[X_1,\ldots, X_m]$. Consider $R$ as standard
graded with $\deg A = 0$ and $\deg X_i = 1$ for all $i$. 
Let $G$ be a finite subgroup of $GL_m(A)$. Let $G$ act linearly on $R$ fixing $A$. Let $S = R^G$.
In this paper we present a comprehensive study of graded components of
local cohomology modules
$H^i_I(S)$ where $I$ is an \emph{arbitrary} homogeneous ideal in $S$. We prove stronger results when $G \subseteq GL_m(K)$. Some of
our results are new even in the case when $A$ is a field.
\end{abstract}
\maketitle
\section{Introduction}
Let $R = \bigoplus_{n \geq 0}R_n$ be a Noetherian graded ring. $R$ need not be 
standard graded.  Let $M$ be a finitely generated graded 
$R$-module. Then usually only local cohomology modules of $M$ \wrt \ $R_+ = \bigoplus_{n \geq 1}R_n$ is studied (even when $R$ is standard graded).
Note that if $I$ is an arbitrary homogeneous ideal in $R$ then $H^i_I(R)$ is a graded $R$-module
for all $i \geq 0$. There is in general no previous study of properties of graded components of $H^i_I(R)$ when $I \neq R_+$ (even in the case when $R_0$ is a field).

In an earlier paper \cite{TP2} the author showed that if $A$ is a regular ring containing a field of characteristic zero  and 
if $R = A[X_1,\ldots, X_m]$ is standard graded ( with $\deg A = 0$) then the theory of local cohomology of
$R$ with respect to \emph{arbitrary} homogeneous ideals of $R$ exhibit striking good behavior. In \cite{TPSR1} some of these properties were proved for any Noetherian ring $A$ containing a field of characteristic zero.
We should note that local cohomology modules over regular rings does indeed show good behavior.
For instance see the remarkable papers \cite{HuSh}, \cite{Lyu-1} and \cite{Lyu-2}.

It has been known for some time that local cohomology modules over ring $A^G$, where $A$ is regular containing a field $K$ and $G$ is
a finite subgroup of $Aut(A)$ with $|G|$ invertible in $A$, also has good properties ( see \cite{NB} and  \cite{TWP}). In view of this 
the author with a co-worker investigated properties of local cohomology modules with respect to \emph{arbitrary} homogeneous ideals in the following setup
\cite{TPSR2}:

$A$ is regular  domain containing a field of characteristic zero and $G$ is a finite subgroup of $Aut(A)$. Let $A^G$ be the ring of invariants. Set
$S = A^G[X_1,\ldots, X_m]$ (standard graded). Then if $I$ is a homogeneous  ideal of $S$ then $H^i_I(S)$  exhibits striking good behavior especially when $A^G$ is Gorenstein.

It is natural to investigate graded local cohomology modules in the following case:

\s \label{std} \emph{Standard Assumption:} $A$ is a Noetherian ring containing a field $K$ of characteristic zero. Let $G$ be a finite group with a group homomorphism $\Phi \colon G \rt GL_m(A)$.
 Set $R = A[X_1,\ldots, X_m]$, standard
graded with $\deg A = 0$ and $\deg X_i = 1$ for all $i$. Let $G$ act linearly on $R$ fixing $A$. Set $S = R^G$. 
 Let $I$ be a homogeneous ideal in $S$. Set $M = H^i_I(S)$. 
It is well-known that $M$ is a graded $S$-module. Set $M = \bigoplus_{n \in \Z}M_n$.

\begin{remark}
Although we are primarily interested in the case when $A$ is regular we are able to prove some results under considerably weaker hypotheses.
\end{remark}

\begin{remark}
 An important remark is that $S$ is usually not standard graded. We should note that in the previous three cases considered by the 
 author (namely \cite{TP2},\cite{TPSR1} and \cite{TPSR2} the ring under consideration was standard graded. 
\end{remark}

We first give a summary of the results proved in this paper.

\textbf{I:} \textit{(Vanishing:)} The first result we prove is that vanishing of almost all graded components of $M$ implies vanishing of $M$. More precisely we show
 \begin{theorem}\label{vanish}
(with hypotheses as in \ref{std}).  If $M _n = 0$ for all $|n|  \gg 0$ then  
$M = 0$.
 \end{theorem}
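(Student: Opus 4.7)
The plan is to reduce Theorem~\ref{vanish} to the analogous vanishing theorem over the polynomial ring $R$ proved in \cite{TPSR1}. First, I would set $N = H^i_{IR}(R)$. By the independence of base for local cohomology, $N$ coincides as a graded $S$-module with $H^i_I(R)$ computed from the $S$-module structure on $R$. Since $\chars K = 0$, the order $|G|$ is invertible, so the Reynolds operator $\rho\colon R \rt S$ splits the graded inclusion $S \hookrightarrow R$ as $S$-modules; choosing generators of $I$ inside $S$, the \v{C}ech complex computing $H^i_I$ is a complex of graded $K[G]$-modules, and the exact functor $(-)^G$ commutes with cohomology, giving $N^G = H^i_I(R^G) = M$. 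In particular $M$ is identified with a graded $S$-module direct summand of $N$, while $N$ inherits its natural graded $R$-module and $K[G]$-module structures.

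Applying the main theorem of \cite{TPSR1} to the graded $R$-module $N = H^i_{IR}(R)$ over the standard graded polynomial ring $R$ tells us that $N = 0$ whenever $N_n = 0$ for $|n| \gg 0$. It therefore suffices to promote the hypothesis $M_n = 0$ for $|n| \gg 0$ to the corresponding vanishing of $N_n$, since then $M \sub N = 0$. The hard part will be this promotion, because $M_n = N_n^G$ vanishing does not a priori force $N_n$ itself to vanish: nontrivial $K[G]$-isotypic components of $N_n$ could persist.

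To carry out the promotion, I would decompose $R = \bigoplus_V R_V$ into $K[G]$-isotypic components (a decomposition of graded $S$-modules with $R_{\text{triv}} = S$); applying $H^i_I(-)$ gives $N = \bigoplus_V N_{(V)}$ with $N_{(V)} = H^i_I(R_V)$ and $N_{(\text{triv})} = M$. Writing $R_V = V \otimes_K M_V$ where $M_V = \Hom_{K[G]}(V, R)$ is the graded $S$-module of $V$-covariants (finitely generated over $S$), the task becomes showing that each $H^i_I(M_V)$ has eventually vanishing graded components. The key mechanism would be the action of the $G$-equivariant differential operators in $D(R)^G$ (which equals $D(S)$ in characteristic zero under a linear $G$-action): $G$-covariant elements of $D(R)$ of type $V$ yield degree-shifting $S$-linear maps that interconnect $M_V$ and $S$, and thereby transport the vanishing of components of $M = H^i_I(S)$ to each $H^i_I(M_V)$. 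The main technical obstacle is organising these equivariant maps uniformly across all irreducibles $V$ with sufficient degree control; once achieved, each $N_{(V)}$ has eventually vanishing components, so $N_n = 0$ for $|n| \gg 0$ and the theorem follows.
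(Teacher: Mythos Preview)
Your reduction to the polynomial ring result in \cite{TPSR1} is set up correctly up to the point where you identify $M = N^G$ as a graded summand of $N = H^i_{IR}(R)$. But the ``promotion'' step --- deducing $N_n = 0$ for $|n| \gg 0$ from $M_n = N_n^G = 0$ --- is a genuine gap, and the mechanism you sketch does not close it. The claim that $D(R)^G = D(S)$ for a linear action is not correct in general (one has a map $D(R)^G \to D(S)$, but it is typically neither injective nor surjective for finite $G$), and even granting some supply of equivariant differential operators you give no argument that they move the $V$-isotypic component into the trivial one with controlled degree shift, uniformly in $V$. Note also that your $K[G]$-isotypic decomposition $R = \bigoplus_V V \otimes_K M_V$ presupposes that the $G$-action on each $R_n$ is defined over $K$; the hypothesis in \ref{std} only gives $\Phi \colon G \to GL_m(A)$, so this decomposition need not exist as stated.

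The paper's proof avoids the promotion problem entirely by never trying to show $N = 0$. Instead it works directly with $M$: after passing to $\overline{N} = N/\Gamma_{(\underline{X})}(N)$ (which agrees with $N$ in degrees $\geq -m+1$ by the generalized Eulerian structure) and arranging that $K$ is uncountable, one finds a linear form $\xi \in K[X_1,\ldots,X_m]_1$ that is a nonzerodivisor on $\overline{N}$. The key move is then to set $\theta = \prod_{\sigma \in G}\sigma(\xi)$; this is a $G$-invariant element of $S_+$ which is still a nonzerodivisor on $\overline{N}$ (each factor is, since $\overline{N}$ is $G$-stable). Multiplication by $\theta$ therefore gives injections $N_i \hookrightarrow N_{i+|\theta|}$ for $i \geq -m+1$, and because $\theta$ is invariant these restrict to injections $M_i \hookrightarrow M_{i+|\theta|}$. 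A dual argument with $T = A[\partial_1,\ldots,\partial_m]$ handles negative degrees. Thus a single nonzero $M_{i_0}$ propagates to infinitely many nonzero components, and no control over the non-trivial isotypic pieces of $N$ is ever needed.
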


 \textbf{II:} \textit{(Testing Vanishing:)} Let $N = H^j_J(R)$ where $J$ is a homogeneous ideal in $R$. 
 In \cite{TPSR1} it is shown that if $N_0 = N_{-m} = 0$ then $N = 0$. A natural question is whether we can test vanishing of graded
 local cohomology modules over $S$ by testing vanishing in finitely many components.
 We show the following:
 \begin{theorem}\label{vanish-test}
(with hypotheses as in \ref{std}). Also assume $A$ is Cohen-Macaulay. 
    If $M_j = 0$ for all $j \in [-m - (|G|! - 1)^m, (|G|! - 1)^m]$ then $M = 0$.
 \end{theorem}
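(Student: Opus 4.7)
The plan is to combine the general vanishing criterion of Theorem~\ref{vanish} with the two-component vanishing test from \cite{TPSR1}, transferring the question from $S$ back to the polynomial ring $R$ via the Reynolds operator.

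Because $|G|$ is a unit in $A$, the Reynolds operator $\rho\colon R \to S$ is a $G$-equivariant, graded $S$-linear splitting of the inclusion $S \hookrightarrow R$. Hence $M$ is a graded $S$-direct summand of $N := H^i_{IR}(R)$, with $M = N^G$, and by \cite{TPSR1} applied to the standard graded polynomial ring $R$ one has $N_0 = N_{-m} = 0 \implies N = 0$, which would force $M = 0$. The task therefore reduces to deducing $N_0 = N_{-m} = 0$ from the vanishing of $M_j$ throughout $[-m-(|G|!-1)^m,\,(|G|!-1)^m]$.

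To carry this out, I would use the \CM assumption on $A$ together with the infinitude of $K \subseteq A$ to produce, via a Dade-type construction (and raising to appropriate powers if necessary), homogeneous primary invariants $\theta_1,\ldots,\theta_m \in S$, all of the common degree $|G|!$. Setting $T := A[\theta_1,\ldots,\theta_m]$, Hochster--Eagon makes $R$ a free graded $T$-module of rank $(|G|!)^m$ with a homogeneous basis in degrees $[0,\,m(|G|!-1)]$, and this basis can be chosen to lie within single $G$-isotypical components of $R$. Decomposing $N = \bigoplus_\rho N_\rho$ into $G$-isotypical components via Maschke, with $N_{\mathrm{triv}} = M$, multiplication by a basis element $r_\alpha$ of character $\sigma_\alpha$ carries $N_\rho$ into $N_{\rho\otimes \sigma_\alpha}$; suitable products of basis elements transport any element of a non-trivial $N_\rho$ in degree $n$ into $M$ in some degree within a window around $n$ of length $(|G|!-1)^m$. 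The hypothesis on $M$ then forces every non-trivial $N_\rho$ to vanish in degrees $0$ and $-m$, whence $N_0 = N_{-m} = 0$ and the conclusion follows from \cite{TPSR1}.

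The main obstacle is this quantitative propagation step: one has to keep careful account of the degree shifts produced by successive multiplications by $T$-basis elements of $R$, and to prove that a window of length $(|G|!-1)^m$ on each side of the required degree is genuinely sufficient to exhaust every non-trivial isotypical component in $N_0$ and $N_{-m}$. The exponent $m$ in the bound reflects the $(|G|!)^m$-rank freeness of $R$ over $T$, while the base $|G|!-1$ reflects the spread of the homogeneous $T$-basis, which sits in degrees $0$ through $m(|G|!-1)$.
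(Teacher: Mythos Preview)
Your reduction to showing $N_0=N_{-m}=0$ for $N=H^i_{IR}(R)$ is a natural idea, but the transport step does not go through as written, and the obstacle is not merely quantitative.

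First, multiplication by positive-degree elements of $R$ need not be injective on graded pieces of $N$, so you cannot ``transport'' a nonzero element of a non-trivial isotypical component into $M$ this way. Already for $G=\Z/2\Z$ acting on $R=A[X]$ by $X\mapsto -X$ and $N=H^1_{(X)}(R)$, the class $X^{-1}\in N_{-1}$ lies in the sign component, yet $X\cdot X^{-1}=0$ in $N$; every positive-degree element of $R$ kills it. Thus knowing $M_{-1}=M_0=0$ tells you nothing about $N_{-1}$ by your mechanism. What actually detects $X^{-1}$ is $\partial\cdot X^{-1}=-X^{-2}$, which lands in $M_{-2}$ and \emph{is} injective there --- but that requires the Weyl-algebra side, which your outline never invokes. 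Second, even when the multiplication is nonzero, a basis element lying in an isotypical component for a higher-dimensional irreducible does not simply permute isotypical components via $\rho\mapsto\rho\otimes\sigma_\alpha$; that formula is only literally correct for one-dimensional $\sigma_\alpha$.

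The paper's argument is structurally different. It does not attempt to prove $N_0=N_{-m}=0$. Instead it localizes to make $A$ local Cohen--Macaulay, chooses an $S$-regular sequence $f_1,\ldots,f_m\in S$ of common degree $c=|G|!$ and a dual sequence $g_1,\ldots,g_m\in A[\partial_1,\ldots,\partial_m]^G$ of degree $-c$, and uses the generalized Eulerian structure of $N$ over $A_m(A)$ to show that all Koszul homologies $H_l(\underline{f},N)$ are concentrated in degrees $[0,(c-1)^m]$ and all $H_l(\underline{g},N)$ in $[-m-(c-1)^m,-m]$. Passing to $G$-invariants gives the same concentration for $H_l(\underline{f},M)$ and $H_l(\underline{g},M)$. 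Then $H_0(\underline{f},M)_j=0$ for $j>(c-1)^m$ gives a surjection $(M_{j-c})^m\twoheadrightarrow M_j$, and one inducts upward; the $\underline{g}$ side handles the downward induction. The essential ingredient you are missing is precisely this use of the $\partial$-operators to move in the negative direction, together with the Koszul-homology concentration coming from the Eulerian structure rather than any injectivity of single multiplications.
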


\s \label{setup-regular} \textit{From now on we assume $A$ is regular 
and that $G$ is a finite subgroup of $GL_m(K)$}. Rest of the hypotheses 
is similar to \ref{std}.

 \textbf{III} \textit{(Infinite generation:)} Recall that each component of $H^m_{S_+}(S)$ is a finitely generated $S$-module, 
 cf., \cite[15.1.5]{BS} (for the standard graded case). We give a sufficient condition for infinite generation of a component of graded local cohomology module over $R$.
\begin{theorem}\label{inf-gen}(with hypotheses as in \ref{setup-regular}). Further assume $A$ is a domain. Assume $I \cap A \neq 0$. 
If $M_t \neq 0$ then
$M_t$ is NOT finitely generated as an $A$-module.
\end{theorem}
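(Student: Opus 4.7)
The plan is to equip each graded piece $M_t$ with a $D(A/K)$-module structure and then invoke the standard principle that a coherent $D$-module over a regular $K$-algebra (with $\chars K=0$) is locally free; the $a$-torsion supplied by any nonzero $a\in I\cap A$ will then force $M_t=0$ as soon as $M_t$ is finitely generated, contradicting $M_t\neq 0$.

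First I would fix $0\neq a\in I\cap A$. Since $M=H^i_I(S)$ is $I$-torsion it is $a$-torsion, and so is each component $M_t$. Next I would produce the $D$-module structure. Write $D:=D(A/K)$, the ring of $K$-linear differential operators on $A$. Via the identification $R=A\otimes_K K[X_1,\ldots,X_m]$, operators in $D$ act on $R$ by extending $K[X_1,\ldots,X_m]$-linearly; this extension preserves the grading of $R$ (because it is $X$-linear) and commutes with the $G$-action (because $G\subseteq GL_m(K)$ fixes $A$ and acts $K$-linearly on the $X_i$). Applied to a \v{C}ech complex $\check{C}^\bullet(f_1,\ldots,f_r;R)$ for homogeneous generators $f_j$ of $I$, this yields a graded $G$-equivariant $D$-action on $H^i_{IR}(R)$. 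As $\chars K=0$, the Reynolds idempotent $e=\tfrac{1}{|G|}\sum_{g\in G}g$ is itself $D$-linear and degree-preserving, so the splitting $R=S\oplus(1-e)R$ propagates to a $D$-linear graded splitting $H^i_{IR}(R)=H^i_I(S)\oplus H^i_I((1-e)R)$. In particular, each $M_t$ is a $D$-module.

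I would then finish by contradiction: suppose $M_t$ is finitely generated over $A$. The first-order part of the $D$-action endows $M_t$ with an integrable $K$-connection, and a coherent $A$-module carrying an integrable $K$-connection over a regular $K$-algebra in characteristic zero is locally free. Since $A$ is a domain, $M_t$ is then torsion-free; combined with being $a$-torsion for $a\neq 0$, this forces $M_t=0$, contradicting the hypothesis.

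The hard step is the "coherent $D$-module is locally free" principle. In full generality for regular Noetherian $K$-algebras this is nontrivial and must either be invoked from the $D$-module material developed earlier in this series of papers, or reduced---via faithfully flat descent from completions at maximal ideals together with the Cohen structure theorem---to the classical smooth affine case. The bookkeeping in the middle paragraph is routine, but it is crucial that $G\subseteq GL_m(K)$ rather than $GL_m(A)$: otherwise the $G$-action need not commute with $D$, and the Reynolds idempotent would fail to be $D$-linear.
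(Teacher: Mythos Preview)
Your proposal is correct and follows the same underlying strategy as the paper: endow $M_t$ with a differential-operator action and exploit the incompatibility between finite generation, that $D$-module structure, and torsion. The execution differs somewhat, though. Rather than working globally with $D(A/K)$ and invoking the ``coherent module with integrable connection is locally free'' principle as a black box, the paper immediately passes to the completion $B=\widehat{A_{\mathfrak m}}\cong k(\mathfrak m)[[Y_1,\ldots,Y_d]]$ at a maximal ideal in the support of $M_t$ (precisely the reduction you flag as needed to justify the black box), builds the $D_B$-module structure there via its Construction~2, and then replaces the abstract principle by an elementary annihilator argument: if $p\in\mathfrak n$ of minimal $\mathfrak n$-order kills $V=M_t\otimes_A B$, the identity $\delta_i p=p\delta_i+\delta_i(p)$ forces each $\delta_i(p)$ to kill $V$ as well, so the order must be $0$ and $p$ a unit, whence $V=0$. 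Your global packaging is cleaner to state, but the paper's route sidesteps any worry about whether $D(A/K)$ carries enough derivations when $A$ is regular but not smooth over $K$, and needs no external input beyond Cohen's structure theorem. You correctly single out the hypothesis $G\subseteq GL_m(K)$ (rather than $GL_m(A)$) as the reason the $D$-action and the $G$-action commute; the paper leans on the same point through its Construction~2.
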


 \textbf{IV} \textit{(Injective dimension:)}
Let $E$ be an $A$-module. Let
$\injdim_A E$ denotes the injective dimension of $E$. Also 
$\Supp_A E = \{ P \mid  E_P \neq 0 \ \text{and $P$ is a prime in $A$}\}$ is the support of an $A$-module $E$. 
By $\dim_A E $ we mean the dimension of $\Supp_A E$ as a subspace of $\Spec(A)$.
\begin{theorem}\label{inj-dim}(with hypotheses as in \ref{setup-regular}).  If $M_t \neq 0$ then
$\injdim M_t \leq \dim M_t$.
\end{theorem}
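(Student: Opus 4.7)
The plan is to reduce the statement to the polynomial ring $R$ and apply the results of \cite{TP2}. Let $N := H^i_{IR}(R)$. Since $G \subseteq GL_m(K)$ fixes $A$ and acts linearly on $X_1,\ldots,X_m$, it acts on $R$ by $A$-algebra automorphisms preserving the grading, hence also on $N$ graded-$A$-linearly. Let $D = D(A/K)$ denote the ring of $K$-linear differential operators on $A$, extended to act on $R$ by differentiating coefficients (and trivially on the $X_i$). Because $G$ fixes $A$, the $G$-action on $R$ commutes with $D$, and $D$ preserves degrees. Thus each $N_t$ is a $D$-module with a compatible $G$-action, and the Reynolds operator $\rho = \frac{1}{|G|}\sum_{g\in G} g$ (available because $\chars K = 0$) is a $D$-linear splitting of $N_t^G \hookrightarrow N_t$. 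The standard identification $H^i_I(S) = H^i_{IR}(R)^G$, obtained by taking $G$-invariants of the \v{C}ech complex on generators of $I$ in $S$ and using exactness of $(-)^G$, then realizes $M_t$ as a direct summand of $N_t$ in the category of $D$-modules.

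By the main structural result of \cite{TP2}, each $N_t$ is a holonomic $D$-module. Holonomic $D$-modules over a regular $K$-algebra of characteristic zero form a Serre subcategory of all $D$-modules, so they are closed under direct summands; in particular $M_t$ is holonomic. Lyubeznik's theorem (\cite{Lyu-1}) asserts that every holonomic $D(A/K)$-module $E$ satisfies $\injdim_A E \leq \dim_A E$; applied to $E = M_t$, this yields the desired bound.

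The main obstacle is the descent of holonomicity from $N_t$ to its summand $M_t$, since it is this structural property (rather than merely the inequality $\injdim \leq \dim$ for $N_t$) that permits Lyubeznik's bound to be applied to $M_t$ itself. The naive chain $\injdim_A M_t \leq \injdim_A N_t \leq \dim_A N_t$ is insufficient, because $\dim_A M_t$ may be strictly smaller than $\dim_A N_t$; it is precisely the compatibility of the $D$-module and Reynolds structures on $N_t$ that produces the sharper inequality involving $\dim_A M_t$.
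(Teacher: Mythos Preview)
Your strategy—exploit $D$-module structure and invoke Lyubeznik's bound—is correct in spirit, but the execution has a real gap.

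You work globally over $D = D(A/K)$ and speak of ``holonomic $D$-modules'' for a \emph{general} regular ring $A$ containing $K$, citing \cite{TP2} for holonomicity of $N_t$ and \cite{Lyu-1} for the inequality $\injdim \leq \dim$. Neither source operates this way. Lyubeznik's results in \cite{Lyu-1} are formulated over a complete regular local ring $B = k[[Y_1,\ldots,Y_d]]$ using $k$-linear differential operators, where $k$ is the \emph{residue} field (not the ground field $K$). There is no theory of holonomic $D(A/K)$-modules in the generality of \ref{setup-regular}, and \cite{TP2} does not supply one: the injective-dimension bound there is obtained prime by prime after localization and completion, not via a global holonomicity statement. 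So the sentence ``by the main structural result of \cite{TP2}, each $N_t$ is a holonomic $D$-module'' is the unjustified step.

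The paper's argument (Theorem~\ref{L-P} and Corollary~\ref{injdim-supp}) carries out this reduction explicitly. For each prime $P$ of $A$ one localizes and completes to $B = \widehat{A_P} \cong k(P)[[Y_1,\ldots,Y_d]]$. The key observation (Lemma~\ref{inj}) is that after base change $M_t$ is itself a $D_B$-module, because $D_B \subseteq (\D^G)_0$ where $\D = A_m(D_B)$; one does not even need your Reynolds-operator splitting of $N_t$. The only input from \cite{Lyu-1} required is then the elementary fact \cite[2.4(a)]{Lyu-1} that an $\n$-torsion $D_B$-module is a direct sum of copies of $E_B(k(P))$. This makes $(H^j_P(M_t))_P$ injective, so by \cite[1.4]{Lyu-1} one has $\mu_j(P, M_t) = \mu_0(P, H^j_P(M_t))$, and Grothendieck vanishing gives $\mu_j(P,M_t) = 0$ for $j > \dim M_t$. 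Your idea of realizing $M_t$ as a $D$-module summand of $N_t$ via the Reynolds operator would also yield the needed $D_B$-module structure after base change, but the passage through ``global holonomicity'' is the part that fails.
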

\textbf{V} \textit{(Bass numbers:)}
 Over $A[X_1,\ldots, X_m]$ bass numbers of graded components of local cohomology modules exhibited a dichotomy: either
 they were infinite for all $n$ or finite for all $n$ see \cite[1.8]{TP2}. For invariant rings we have the following:

\begin{theorem}\label{bass}(with hypotheses as in \ref{setup-regular}.) 
 Let $M = H^i_I(S) = \bigoplus_{n \in \Z}M_n$ and let $P$ be a prime ideal of $A$ and let 
 $j \geq 0$. Let $\mathcal{O}(l)  = l + |G|!\Z$ be a coset of $|G|!\Z$. Then
if $\mu_j(P, M_{n_0})$ is finite for some $n_0 \in \mathcal{O}(l)$ then it is finite for 
all $n \in \mathcal{O}(l)$.
\end{theorem}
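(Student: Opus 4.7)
The plan is to lift the problem from the invariant ring $S$ to the polynomial ring $R$, apply the Bass-number dichotomy already proved there in \cite[1.8]{TP2}, and then extract the information about $G$-invariants while carefully tracking residues modulo $|G|!$.

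Since $|G|$ is invertible in $K\subseteq A$, the Reynolds operator $\rho=\tfrac{1}{|G|}\sum_{g\in G}g$ splits the inclusion $S\hookrightarrow R$ as $S$-modules. Extending $I$ to $IR$ and taking local cohomology produces a graded $R$-module $N:=H^{i}_{IR}(R)$ on which $G$ acts compatibly with the grading, with $M=N^{G}$ and each $M_n=N_n^{G}$ an $A$-module direct summand of $N_n$. In particular $\mu_j(P,M_n)\leq \mu_j(P,N_n)$, and by \cite[1.8]{TP2} either $\mu_j(P,N_n)$ is finite for every $n$ (in which case the conclusion is immediate, and in fact stronger than the claim) or it is infinite for every $n$; the latter is the case we focus on.

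Fix a coset $\mathcal{O}(l)=l+|G|!\Z$. The integer $|G|!$ is chosen because $g^{|G|!}=1$ for every $g\in G$, which makes degree-$|G|!$ symmetrisation automatic: elements such as $\sigma_k=\sum_{g\in G}g(X_k^{|G|!})\in S_{|G|!}$ yield $A$-linear multiplication maps $M_n\to M_{n+|G|!}$ that stay inside $\mathcal{O}(l)$. More systematically, the Veronese $R^{(|G|!)}=\bigoplus_{k\geq 0}R_{k|G|!}$ acts on $N^{[l]}:=\bigoplus_{n\in\mathcal{O}(l)}N_n$, its $G$-invariant subring $S^{(|G|!)}=(R^{(|G|!)})^G$ acts on $M^{[l]}:=\bigoplus_{n\in\mathcal{O}(l)}M_n=(N^{[l]})^G$, and the Reynolds operator supplies an $S^{(|G|!)}$-module splitting $N^{[l]}=M^{[l]}\oplus Q^{[l]}$. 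One now re-runs the argument of \cite[1.8]{TP2} in this invariant setting: the $S^{(|G|!)}$-module structure on $M^{[l]}$ together with the ring of differential operators on $S$ relates the components $M_n$ with $n\in\mathcal{O}(l)$ by $D(A,K)$-linear maps---built from the $\sigma_k$ and from $G$-invariant combinations of the partial derivatives---whose kernels and cokernels contribute boundedly to any fixed Bass number $\mu_j(P,-)$. Consequently, finiteness at one $n_0\in\mathcal{O}(l)$ propagates to every $n\in\mathcal{O}(l)$.

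The principal difficulty is this last propagation step. The argument of \cite[1.8]{TP2} is designed for the standard graded polynomial ring $R$ over $A$ and uses the $A$-linear multiplications by the $X_i$ (and by the $\partial_i$, in the Weyl-algebra picture) to interpolate between consecutive degrees of $N$; neither $S$ nor $S^{(|G|!)}$ is standard graded in general, so the argument must be rebuilt using only $G$-invariant polynomial and differential operations. The coset formulation of the theorem---as opposed to the unconditional dichotomy of \cite[1.8]{TP2}---is forced by this phenomenon: different cosets modulo $|G|!$ carry genuinely different $G$-isotypic content in the graded pieces of $N$, and $|G|!$ is the smallest convenient integer for which enough invariant differential operations are available to close the argument.
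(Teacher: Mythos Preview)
Your proposal has a genuine gap at exactly the point you yourself flag as ``the principal difficulty'': the propagation step is asserted, not proved. You claim that the $G$-invariant multiplications and differential operations relate $M_n$ and $M_{n\pm |G|!}$ by $A$-linear maps ``whose kernels and cokernels contribute boundedly to any fixed Bass number $\mu_j(P,-)$,'' but you give no argument for this boundedness. Over the polynomial ring in \cite[1.8]{TP2} the argument works because the relevant Koszul homologies are controlled by holonomicity over a Weyl algebra; you cannot simply ``re-run'' that argument over $S^{(|G|!)}$ without supplying the analogous finiteness, and nothing in your outline does so. The direct-summand inequality $\mu_j(P,M_n)\leq \mu_j(P,N_n)$ only handles the trivial case; in the interesting case (all $\mu_j(P,N_n)$ infinite) your proposal reduces to a restatement of the problem.

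The paper's proof follows a different and more concrete route. First, via Theorem \ref{L-P} one has $\mu_j(P,M_n)=\mu_0(P,H^j_P(M)_n)$, so it suffices to study the zeroth Bass number of the $P$-torsion module $H^j_{PS}(\f(S))$. Localising at $P$ and completing, one is over $B=k[[Y_1,\ldots,Y_d]]$; then Construction \ref{const-bass-rachel} takes Koszul homology in $Y_1,\ldots,Y_d$ to reduce to a graded module $M^*$ over $A_m(k)^G$ with $\dim_k M^*_n=\mu_j(P,M_n)$. The crucial input is now that $H_d(\underline{Y},N)$ is a \emph{holonomic} generalized Eulerian $A_m(k)$-module, so that $H_i(\underline{f},M^*)$ and $H_i(\underline{g},M^*)$ are \emph{finite-dimensional} $k$-vector spaces, where $\underline{f}$ and $\underline{g}$ are the fundamental and dual fundamental invariants of degree $\pm |G|!$ constructed in Proposition \ref{grade}. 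Finiteness then propagates forward along $\mathcal{O}(l)$ via the strand $(M^*_{n_0})^m\to M^*_{n_0+c}\to H_0(\underline{f},M^*)_{n_0+c}\to 0$, and backward via the analogous strand with $\underline{g}$. Your sketch contains neither the reduction to $A=k$ nor the holonomicity input that makes the Koszul homology finite-dimensional; without these, the propagation you need is unsupported.
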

 
 \textbf{VI} (\textit{ Growth of Bass numbers}). Fix $j \geq 0$. Let $P$ be a prime ideal in $A$ such that $\mu_j(P, H^i_I(S)_n)$ is finite for all 
 $n \in \mathcal{O}(l)$ where $\mathcal{O}(l)$ is as in Theorem \ref{bass}.
 $n \in \Z$.  Set $n = l + et$. We may ask about the growth of the function $n \mapsto \mu_j(P, H^i_I(S)_n)$ as $t \rt -\infty$ and when $t \rt + \infty$. We prove
  
\begin{theorem}\label{basspoly-intro}(with hypotheses as in \ref{bass}.)
If $\mu_j(P, M_n)< \infty$ for all $n \in \mathcal{O}(l)$, then there exists $\alpha(X)$, $\beta(X) \in Q[X]$ of degree $\leq m-1$ such that 
\begin{enumerate}[\rm (1)]
\item $\mu_j(P, \f(S)_n)=\alpha(t)$ for all $t \gg 0$ where $n = l + |G|!t$.
\item $\mu_j(P, \f(S)_n)= \beta(t)$ for all $t \ll 0$ where $n = l + |G|!t$.
\end{enumerate}
\end{theorem}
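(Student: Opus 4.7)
The plan is to lift the question from $S$ to $R = A[X_1,\ldots,X_m]$, decompose the lifted module under the $G$-action, and apply the polynomial-growth result for Bass numbers of graded components of local cohomology over polynomial rings from \cite{TP2} to each isotypic piece.

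First I would reduce to $R$: since $|G|$ is invertible in $A$, taking $G$-invariants is exact and commutes with local cohomology, so if $N := H^i_{IR}(R)$ then $M = H^i_I(S) \cong N^G$ and $M_n = (N_n)^G$ for every $n \in \Z$. By the author's earlier work \cite{TP2}, the Bass numbers $\mu_j(P, N_n)$ are polynomial in $n$ of degree at most $m-1$ for $n \gg 0$ and for $n \ll 0$. This gives the correct degree bound for the ambient module $N$, but not yet for the $G$-invariant part $M$.

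To isolate the invariant part, I would decompose $N$ as an $A[G]$-module. Harmlessly extending $K$ to contain a primitive $|G|$-th root of unity, the group ring $K[G]$ becomes a product of matrix algebras and $N = \bigoplus_\alpha N^{(\alpha)}$ splits into isotypic components indexed by irreducible $K[G]$-modules $V_\alpha$; this decomposition respects the $\Z$-grading, and $M_n$ is exactly the trivial-isotypic summand of $N_n$. Finiteness of $\mu_j(P, N_n^{(\alpha)})$ along each coset modulo $|G|!$ is already granted by Theorem~\ref{bass}.

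The crux is to upgrade the polynomial-growth statement from the full $\mu_j(P, N_n)$ to each isotypic component $\mu_j(P, N_n^{(\alpha)})$ individually, obtaining a polynomial in $n$ of degree $\leq m-1$ along each coset of $|G|!\,\Z$, both for $t \gg 0$ and $t \ll 0$. I would revisit the D-module / graded-Hilbert argument of \cite{TP2}: $N$ is a graded $D(R/A)$-module, the $G$-action commutes with the subring $D(R/A)^G$ of invariant differential operators, and each $N^{(\alpha)}$ is a graded $D(R/A)^G$-submodule. The individual degree-shift operators $X_i$ and $\partial_{X_i}$ used in \cite{TP2} do not preserve isotypic components, but suitable $G$-symmetrized products of them live in $D(R/A)^G$ and shift the $\Z$-degree by multiples of $|G|!$. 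Running the filtration and Hilbert-polynomial-on-associated-graded argument of \cite{TP2} with these equivariant operators then yields quasi-polynomial behavior of $\mu_j(P, N_n^{(\alpha)})$ of period dividing $|G|!$ and polynomial degree at most $m-1$ on each coset; specializing to $\alpha = $ trivial gives the theorem for $M_n$.

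The main obstacle is precisely this equivariant refinement. The issue is that polynomial growth for the full $N_n$ does not formally split across isotypic summands, so one must verify that the D-module filtration argument of \cite{TP2} is compatible with the idempotent projectors $e_\alpha \in K[G]$ and produces the period $|G|!$ rather than merely $|G|$; the extra factorial reflects the need to stabilize all isotypic multiplicities simultaneously along each coset, the same phenomenon already underlying Theorem~\ref{bass}.
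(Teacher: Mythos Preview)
Your proposal has a genuine gap, and the paper's argument avoids it by taking a different and more direct route.

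The gap is exactly what you flag as ``the main obstacle'': you reduce the problem to showing that $\mu_j(P, N_n^{(\alpha)})$ is eventually polynomial in $t$ along each coset of $|G|!\,\Z$ for each isotypic summand $N^{(\alpha)}$, but you do not actually prove this. Knowing that $\mu_j(P, N_n)$ is polynomial in $n$ (from \cite{TP2}) gives nothing for the individual summands, and your sketch of an ``equivariant filtration argument'' with $G$-symmetrized differential operators is not carried out. In particular you never produce the finite-dimensionality statement that would force the $m^{th}$ difference of $t\mapsto \mu_j(P, N_{l+ct}^{(\alpha)})$ to vanish for large $t$. So as written, the proof is incomplete.

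The paper sidesteps the isotypic decomposition entirely. It first uses Construction~\ref{const-bass-rachel} (localize at $P$, complete, take Koszul homology in $Y_1,\ldots,Y_d$) to reduce to an $A_m(k)^G$-module $M^*$ with $\dim_k M^*_n = \mu_j(P, \f(S)_n)$; this step already isolates the invariant module, so no splitting of $N$ is needed. It then takes the Koszul complex $\K(\underline{f}, M^*)$ on the fundamental invariants $f_1,\ldots,f_m\in S$, each of degree $c=|G|!$. By Lemma~7.5 the homology $H_i(\underline{f}, M^*)$ is finite-dimensional over $k$. The degree-$n$ strand of this Koszul complex involves only the spaces $M^*_{n-ic}$ for $i=0,\ldots,m$, so writing $\psi(t)=\dim_k M^*_{l+ct}$ the Euler characteristic in degree $l+ct$ is $(\Delta^m\psi)(t)$; finite-dimensional total homology forces $\Delta^m\psi(t)=0$ for $t\gg 0$, hence $\psi$ agrees with a polynomial of degree $\le m-1$ for $t\gg 0$. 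The case $t\ll 0$ is handled symmetrically with the dual fundamental invariants $g_1,\ldots,g_m$ of degree $-c$. Note that because the $f_i$ (and $g_i$) are themselves $G$-invariant, the Koszul complex lives entirely on the invariant side; this is precisely what lets the paper bypass the isotypic-splitting obstacle you ran into.
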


\textbf{VII}(\textit{Associate Primes})
Let $E= \bigoplus_{n \in \Z}E_n$ be a graded module over a graded ring $S= \bigoplus_{n \geq 0}S_n$ (not necessarily standaed graded). For associated primes we ask 
\begin{enumerate}[\rm 1)]
\item Is $\bigcup_{n \in \Z}\Ass_{S_0} E_n$ is finite?
\item Does there exists $v >0$  and $n_0, n_0'$ such that if $\Theta_l = l + v \Z$ for some $0 \leq l \leq v-1$ then 
\begin{enumerate}[\rm i)]
\item $\Ass_{S_0} E_n = \Ass_{S_0} E_{n_0}$ for all $n \in \Theta_l$ and $n \geq n_0$?
\item $\Ass_{S_0} E_n = \Ass_{S_0} E_{n'_0}$ for all $n \in \Theta_l$ and $n \leq n'_0$?
\end{enumerate}
\end{enumerate}
We call $(2)$ {\it periodic stability} of Associated primes of graded components  of $E$ \wrt \ $S_0$.
\begin{theorem}\label{ass-primes}(with hypotheses as in \ref{setup-regular}.)
Further assume either $(A, \m)$ is a regular local ring or is a smooth affine variety.
Let $I$ be a homogeneous ideal in $S$ and set $M = H^i_I(S) = \bigoplus_{n \in \Z}M_n$
\begin{enumerate}[\rm 1)]
\item $\bigcup_{i \in \Z} \Ass_A M_i$ is a finite set.
\item There exists periodic stability of associated primes of graded components of   $M$ \wrt \ $A$.
\end{enumerate}
\end{theorem}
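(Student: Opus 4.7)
The plan is to realize $M = H^i_I(S)$ as a direct summand, in each degree, of the analogous local cohomology module over the polynomial ring $R = A[X_1,\ldots,X_m]$, then deduce part (1) from the associated-primes finiteness statement for $R$ (proved in \cite{TP2}), and deduce part (2) from the Bass-number Theorems \ref{bass} and \ref{basspoly-intro} established earlier in this paper.

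First I would use the Reynolds operator $\rho \colon R \to S$, available since $|G|$ is invertible in $A$, to split the inclusion $S \hookrightarrow R$ as a map of graded $S$-modules, obtaining a decomposition $R = S \oplus W$ with $W = \ker \rho$. The Cech complex on generators of $I$ identifies $H^i_I(R)$ with $H^i_{IR}(R)$; call this graded $S$-module $N$. Because local cohomology commutes with direct sums,
\begin{equation*}
N \;=\; H^i_I(S) \oplus H^i_I(W) \;=\; M \oplus L,
\end{equation*}
so in each degree $M_n$ is an $A$-direct summand of $N_n$, giving $\Ass_A M_n \subseteq \Ass_A N_n$ for every $n \in \Z$. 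Under the present hypotheses on $A$, the polynomial-ring version of part (1) from \cite{TP2} asserts that $\bigcup_n \Ass_A N_n$ is finite, which immediately yields part (1).

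For part (2), I would set $v = |G|!$, fix a coset $\mathcal{O}(l) = l + v\Z$, and treat one prime $P$ at a time from the finite set $\bigcup_n \Ass_A M_n$. The key equivalence is $P \in \Ass_A M_n \Leftrightarrow \mu_0(P, M_n) > 0$. Apply Theorem \ref{bass} with $j = 0$: either $\mu_0(P, M_n) = \infty$ for every $n \in \mathcal{O}(l)$, in which case $P \in \Ass_A M_n$ throughout $\mathcal{O}(l)$; or $\mu_0(P, M_n) < \infty$ for every $n \in \mathcal{O}(l)$, in which case Theorem \ref{basspoly-intro} provides polynomials $\alpha_P, \beta_P \in \QQ[X]$ with $\mu_0(P, M_n) = \alpha_P(t)$ for $t \gg 0$ and $\mu_0(P, M_n) = \beta_P(t)$ for $t \ll 0$ (with $n = l + vt$). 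Since a polynomial over $\QQ$ is either identically zero or eventually of constant nonzero sign at $\pm\infty$, and $\mu_0 \geq 0$, each of $\alpha_P$ and $\beta_P$ is either identically zero or eventually strictly positive; hence the membership of $P$ in $\Ass_A M_n$ stabilizes in both tails of $\mathcal{O}(l)$. Taking unions over the finitely many candidate primes yields periodic stability with period $v = |G|!$.

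The main obstacle is the reduction in part (1): while the Reynolds splitting is clean, it relies on the polynomial-ring finiteness statement from \cite{TP2}, which is where the regularity of $A$ (or smoothness of $\Spec A$) enters essentially through a $D$-module-theoretic analysis of $N = H^i_{IR}(R)$. Once (1) is available, the periodic stability in (2) falls out of the quasi-polynomial behavior of Bass numbers already supplied by Theorem \ref{basspoly-intro}, with no further geometric input needed.
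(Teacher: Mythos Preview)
Your proposal is correct and follows essentially the same route as the paper: for (1) you use the Reynolds splitting to realize $M_n$ as an $A$-summand of $H^i_{IR}(R)_n$ and invoke the finiteness result from \cite{TP2}; for (2) you reduce to finitely many primes via (1) and then use the dichotomy of Theorem~\ref{bass} together with the quasi-polynomial behavior of Theorem~\ref{basspoly-intro} to conclude periodic stability. This is exactly the paper's argument, with your exposition being slightly more explicit about why a rational polynomial taking nonnegative integer values must be eventually zero or eventually positive.
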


We now discuss:\\
\emph{The special case when $A$ is a field  $K$ (of characteristic zero):} In this special case all our results except Theorem's  
\ref{inf-gen}, \ref{inj-dim} and \ref{ass-primes}(1) yield non-trivial, previously unknown results about graded components of local cohomology modules of homogenous ideals
in $K[X_1,\ldots, X_m]^G$. Clearly Theorem's \ref{vanish} and \ref{vanish-test} are intetersting in this case.
Regarding bass numbers note that only $0$ is the only prime ideal
and $\mu_0(0, M_n) = \dim_K M_n$. Our result Theorem \ref{bass} states that in a coset of $|G|! \mathbb{Z}$ all these
dimensions are finite or infinite. Furthermore if they are finite it coincides with asymptotically with a polynomial. Theorem
\ref{ass-primes}(2)is an easy consequence of these results.

We now describe in brief the contents of this paper. In section two we discuss some preliminary results that we need.
In section three we prove Theorem \ref{vanish} while Theorem \ref{vanish-test} is proved in section four. In section five we prove Theorem \ref{inf-gen} and \ref{inj-dim}. In section six we prove Theorem's \ref{bass} and \ref{basspoly-intro}.  Finally in section seven  we prove Theorem \ref{ass-primes}.
\begin{remark}
We will prove all our results more generally for $\f(S)$ where $\f= H_{I_1}^{i_1} \circ \cdots \circ H_{I_s}^{i_s}$ and $I_1, \ldots, I_s$ are homogeneous ideals of $S$.
This generality is needed since if we investigate Bass numbers of $H^i_I(S)_n$
\wrt \ prime $P$ of $A$ then we have to investigate graded components of  $H^j_{PS}(H^i_I(S))$.
\end{remark}

\section{preliminary results}
In this section we discuss a few preliminary results that we need.

\s\emph{Setup}:\label{setup} Throughout $A$ is a commutative Noetherian ring containing a field of characteristic zero, $G$ is a finite group and $\Phi: G \to Gl_m(A)$ is a group homomorphism. Let $R= A[X_1, \ldots, X_m]$ be standard graded with $\deg A=0$ and $\deg X_i=1$ for all $i$. Let $G$ act's linearly on $R$ fixing $A$ and $S= A[X_1, \ldots, X_m]^G$ be the ring of invariants of $G$. Throughout we will work only with homogeneous ideals in $R$ and $S$. 

\begin{lemma}[with the hypothesis as in setup \ref{setup}]\label{flat}
Let $A \to B$ a flat extension. Note $Gl_m(A) \to Gl_m(B)$ is a group homomorphism. So we have a natural group homomorphism $G \to Gl_m(B)$. Then $B \otimes_A(A[X_1, \ldots, X_m]^G) \cong (B[X_1, \ldots, X_m])^G$ as rings. 
\end{lemma}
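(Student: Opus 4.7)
The plan is to exploit the Reynolds operator, which exists because $A$ contains a field of characteristic zero and hence $|G|$ is a unit in $A$. Define $\rho := \frac{1}{|G|}\sum_{g \in G} g \colon R \to R$, an $A$-linear idempotent whose image is exactly $R^G = S$. Thus $\rho$ splits the inclusion $S \hookrightarrow R$ as $A$-modules, giving a direct sum decomposition
\[
R = S \oplus \ker \rho
\]
of $A$-modules.

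Next I would apply $B \otimes_A -$ to this decomposition. Since the sequence already splits, the resulting decomposition $B \otimes_A R = (B \otimes_A S) \oplus (B \otimes_A \ker \rho)$ holds as $B$-modules without even invoking flatness. The canonical isomorphism of $B$-algebras $B \otimes_A R \cong B[X_1,\ldots,X_m]$ (the standard base-change property of polynomial rings) identifies the natural $G$-action on $B \otimes_A R$, inherited from $R$, with the $G$-action on $B[X_1,\ldots,X_m]$ coming from the composition $G \to GL_m(A) \to GL_m(B)$; this is just the statement that the linear transformations defining the $G$-action have the same matrix entries after base change.

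Under this identification, the Reynolds operator for the $G$-action on $B[X_1,\ldots,X_m]$, namely $\rho_B = \frac{1}{|G|}\sum_{g \in G} g$, agrees with $\mathrm{id}_B \otimes \rho$, since both are $B$-linear and act the same way on the generators $1 \otimes X_i$. Therefore the image of $\rho_B$, which equals $(B[X_1,\ldots,X_m])^G$, is precisely $B \otimes_A S$. This gives a $B$-module isomorphism $B \otimes_A S \xrightarrow{\sim} (B[X_1,\ldots,X_m])^G$, and the map is clearly a ring homomorphism since it is the restriction of the ring isomorphism $B \otimes_A R \cong B[X_1,\ldots,X_m]$.

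There is no real obstacle here; the only mild point requiring care is the compatibility of the two $G$-actions on $B \otimes_A R$ and $B[X_1,\ldots,X_m]$ under the canonical base-change isomorphism, which is the functoriality of $GL_m(-)$ applied to $A \to B$. The whole argument is characteristic-zero dependent (via the Reynolds operator) but does not actually use flatness of $A \to B$, although flatness would give an alternative proof via $\Hom$ and the exact sequence $0 \to S \to R \to R \to \bigoplus_{g} R$, $r \mapsto (g\cdot r - r)_g$.
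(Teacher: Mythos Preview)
Your proof is correct and follows essentially the same route as the paper's: both exploit the Reynolds operator to split $R^G \hookrightarrow R$, base-change, and then identify $\mathrm{id}_B \otimes \rho$ with the Reynolds operator $\rho_B$ on $B[X_1,\ldots,X_m]$ (the paper does this degree by degree, you do it globally). One small slip: since $\rho_B$ is not a ring homomorphism, agreement on the \emph{algebra} generators $1\otimes X_i$ does not suffice to conclude $\rho_B=\mathrm{id}_B\otimes\rho$; you should instead check on simple tensors $b\otimes f$ (using $\sigma(b\otimes f)=b\otimes\sigma(f)$), which is exactly what the paper does.
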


\begin{proof}
Note $B \otimes_A (A[\underline{\bf X}]^G) \subseteq (B[\underline{\bf X}])^G$. Set $F= A[X_1, \ldots, X_m]_i$. Then we have a split exact sequence of free $A$-modules 
\[
\xymatrix{
0 \ar[r]& F^G \ar@{^{(}->}[r]^i & F  \ar@/^1pc/[l]^\rho \ar[r] & F/F^G \ar[r] & 0}\]
where $\rho: F \to F^G$ is the Reynolds operator. Then \[0 \to B \otimes F^G \to B \otimes F \to B \otimes F/F^G \to 0\]
also split. Note if $b \in B$ and $ f \in F$ then for any $\sigma \in G$ we have $\sigma (b \otimes f)= b \otimes \sigma(f)$.
Thus $\rho(b \otimes f)= b \otimes \rho(f)$. 
If $\xi \in (B \otimes F)^G$ then $\xi=b_1 \otimes f_1 + \cdots+ b_s \otimes f_s$ (say) where
$b_i \in B$ and $f_i \in F$. Hence $\xi=\rho(\xi)= \sum b_i \otimes \rho(f_i) \in B \otimes F^G$.
Thus $(B \otimes F)^G=B \otimes F^G$. So the result follows.  
\end{proof}

\s \label{use-flat} We will use Lemma \ref{flat} in the following instances.

1. $W$ is a multiplicatively closed subset of $A$. Then $B = W^{-1}A$.

2. $(A,\m)$ is Noetherian local and $B = \widehat{A}$ the completion of $A$ \wrt \ $\m$.

3. If $A$ only contains a countable field $K$ then we set $B = A[[X]]_X$.
Note $B$ is a faithfully flat extension of 
$A$; for instance see \cite[3.2]{P3}.  Also $B$ contains $K((X)) = K[[X]]_X$ which is uncountable.

The following result is definitely known to the experts. We give a proof for convenience of the reader. I thank J. K. Verma
for indicating a proof.
\begin{proposition}\label{hsop}
Let $k$ be an  field of characteristic zero  and let $G$ be a finite subgroup of $GL_m(k)$. Let $R = k[X_1, \ldots, X_m]$ and let 
$G$ act linearly on $R$. Set $S= R^G$. Then there exists a homogeneous system of parameters $f_1,\ldots, f_m$ of $S$ with $\deg f_i = |G|!$
for all $i$.
\end{proposition}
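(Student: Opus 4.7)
The plan is to construct the required system of parameters explicitly as powers of $G$-orbit products of generic linear forms in $R_1$. Since $k$ has characteristic zero it is infinite, which will let me use a generic-position argument.

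First I would pick $\ell_1,\ldots,\ell_m \in R_1$ so that for \emph{every} tuple $(\sigma_1,\ldots,\sigma_m) \in G^m$ the forms $\sigma_1(\ell_1),\ldots,\sigma_m(\ell_m)$ are $k$-linearly independent in $R_1 \cong k^m$. For each fixed tuple this is a nonempty Zariski-open condition on $(\ell_1,\ldots,\ell_m) \in R_1^m$: indeed the map $(\ell_i) \mapsto (\sigma_i(\ell_i))$ is a linear automorphism of $R_1^m$, and the locus of bases is open dense. Intersecting over the $|G|^m$ (finitely many) choices of tuple still gives a nonempty open set over the infinite field $k$.

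Then I would set $p_i := \prod_{\sigma \in G}\sigma(\ell_i) \in R_{|G|}$. Since left-multiplication by any $\tau \in G$ merely permutes the factors, each $p_i$ is $G$-invariant, hence $p_i \in S_{|G|}$. I claim $p_1,\ldots,p_m$ is a hsop of $S$. Expanding in $\Spec(R)=\mathbb{A}^m_k$,
\[
V(p_1,\ldots,p_m) \;=\; \bigcap_{i=1}^m \bigcup_{\sigma \in G}V(\sigma(\ell_i)) \;=\; \bigcup_{(\sigma_1,\ldots,\sigma_m)\in G^m} V(\sigma_1(\ell_1),\ldots,\sigma_m(\ell_m)),
\]
and by the choice of the $\ell_i$ each set on the right is cut out by $m$ linearly independent linear forms, hence equals $\{0\}$. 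Thus $(p_1,\ldots,p_m)R$ is $R_+$-primary; since $R$ is integral over $S$ with $R_+ \cap S = S_+$, going up forces $(p_1,\ldots,p_m)S$ to be $S_+$-primary, so $p_1, \dots, p_m$ is indeed a hsop of $S$.

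Finally I would take $f_i := p_i^{(|G|-1)!} \in S_{|G|!}$, which has degree $|G|\cdot(|G|-1)! = |G|!$. Raising generators to positive powers does not change the radical of the ideal they generate, so $\sqrt{(f_1,\ldots,f_m)S} = \sqrt{(p_1,\ldots,p_m)S} = S_+$, producing the required homogeneous system of parameters of degree $|G|!$. The only substantive step is the generic-position claim in the first paragraph; everything afterward is formal manipulation of the finite integral extension $S \subseteq R$.
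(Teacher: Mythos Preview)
Your proof is correct and takes a genuinely different route from the paper's. The paper invokes Noether's degree bound to write $S=k[u_1,\ldots,u_l]$ with $\deg u_j\le |G|$, raises each $u_j$ to the power $|G|!/\deg u_j$ so that all have degree $|G|!$, and then takes $m$ general $k$-linear combinations of these to obtain a hsop of the subalgebra $T$ they generate; finiteness of $S$ over $T$ then transfers this to a hsop of $S$.

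Your argument instead manufactures invariants directly as orbit products $p_i=\prod_{\sigma\in G}\sigma(\ell_i)$ of generic linear forms, checks $R_+$-primariness by the clean prime-avoidance computation over all tuples in $G^m$, and only afterwards adjusts the degree by taking $(|G|-1)!$-th powers. This avoids any appeal to Noether's bound and is entirely self-contained; it also yields invariants with the extra structure of being powers of products of linear forms. The paper's approach is shorter once one is willing to cite Noether's bound, and it produces invariants of degree exactly $|G|!$ in one step rather than passing through degree $|G|$ first. Both arguments rely on $k$ being infinite (characteristic zero) for the generic-position step, and both ultimately use the finite integral extension $S\subseteq R$ (you for lying over, the paper for comparing $T$ and $S$).
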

\begin{proof}
By a result of E. Noether we get that $S = k[u_1,\ldots, u_l]$ for some $l$ with $\deg u_i \leq |G|$;   cf. \cite[2.4.2]{LS}.
Set $s_i = |G|!/\deg u_i$ 
and $T = k[ u_1^{s_1}, \ldots, u_l^{s_l}]$. We note that $\deg u_j^{s_j} = |G|!$ for all $j = 1, \ldots, l$. Clearly $m$ general
linear combination of $u_j^{s_j}$, say $f_1,\ldots, f_m$, will form a homogeneous system of parameters of $T$. As $S$ is a finite extension of $T$ we get
that $f_1, \ldots, f_m$ is a homogeneous system of parameters of $S$. Note $\deg f_j = |G|!$ for all $j$.
\end{proof}
\begin{proposition}[with the hypothesis as in setup \ref{setup}]\label{grade}
Also assume  that $(A,\m)$ is Cohen-Macaulay. Then there exists homogeneous   $f_1, \ldots, f_m$ in $S$ such that
\begin{enumerate}[\rm 1)]
\item $f_1,\ldots, f_m$ forms a regular sequence in $R$ and hence in $S$.
\item $\deg f_i = |G|!$ for all $i$.
\item 
\[
(R/(\underline{f}))_j = \begin{cases}
                        0  & \text{if} \  j > (|G|! - 1)^m \\
                        \neq 0 & \text{for } \ j = 0,1,\ldots, (|G|! - 1)^m.
                         \end{cases}
\]
\item
$(R/(\underline{f}))$ is a finite free $A$-module.
\end{enumerate}
\end{proposition}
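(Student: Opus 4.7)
The strategy is to reduce to the field case provided by Proposition \ref{hsop} and then lift everything from $R/\m R$ to $R$ via an auxiliary polynomial subring. Write $k = A/\m$ and $e = |G|!$. Because $\chars K = 0$, the Reynolds operator $\rho : R \to S$ is an $A$-linear retraction of $S \hookrightarrow R$, so each $R_j = S_j \oplus \ker \rho_j$ as an $A$-module; such splittings survive any base change, whence the argument of Lemma \ref{flat} (which uses only the splitting, not flatness of $B$) yields $(S/\m S)_j = (R/\m R)^G_j$ for every $j$. Apply Proposition \ref{hsop} to $k[X_1, \ldots, X_m]^G$ to obtain a homogeneous system of parameters $\bar f_1, \ldots, \bar f_m$ of degree $e$; lifting each $\bar f_i$ to some $g_i \in R_e$ and setting $f_i := \rho(g_i) \in S_e$ produces homogeneous elements of $S$ of degree $e$ reducing to $\bar f_i$ modulo $\m R$, so (2) holds by construction.

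Now introduce the graded polynomial $A$-algebra $B := A[T_1, \ldots, T_m]$ with $\deg T_i = e$, and the map $\phi : B \to R$ sending $T_i \mapsto f_i$. In the fibre over $\m$, $\bar f_1, \ldots, \bar f_m$ remains an h.s.o.p.\ in the finite integral extension $k[X_1, \ldots, X_m]$ of $k[X_1, \ldots, X_m]^G$, and since $k[X_1, \ldots, X_m]$ is Cohen--Macaulay it is a regular sequence there. The standard ``CM-over-polynomial implies free'' criterion then gives that $R/\m R$ is a free graded $B/\m B$-module with graded Hilbert series $(1 + t + \cdots + t^{e-1})^m$. I would then lift this to freeness of $R$ over $B$ via two applications of graded Nakayama. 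Pick homogeneous $m_1, \ldots, m_s \in R$ whose images form a $B/\m B$-basis of $R/\m R$. In each degree $n$ one has $R_n = (\sum_i B m_i)_n + \m R_n$; since $R_n$ is a finitely generated free $A$-module (basis: the degree-$n$ monomials in the $X_j$), ordinary Nakayama over the local ring $A$ yields $R_n = (\sum_i B m_i)_n$, and hence $R = \sum_i B m_i$. The kernel $K$ of the induced surjection $\pi : \bigoplus_i B(-\deg m_i) \twoheadrightarrow R$ then satisfies $K = \m K$ (since $\pi$ becomes an isomorphism after tensoring with $k$), and a second degree-by-degree Nakayama---applied to the finitely generated $A$-submodule $K_n$ of the free $A$-module $\bigl(\bigoplus_i B(-\deg m_i)\bigr)_n$---forces $K = 0$. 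Thus $R$ is a free graded $B$-module.

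With $R$ free over $B$, the remaining claims follow quickly. The sequence $T_1, \ldots, T_m$ is regular in $B$, and flatness of $R$ over $B$ transports this to $f_1, \ldots, f_m$ being a regular $R$-sequence. Regularity in $S$ follows from regularity in $R$ together with the Reynolds operator: given $s \in S$ with $f_{i+1} s \in (f_1, \ldots, f_i) S \subseteq (f_1, \ldots, f_i) R$, regularity in $R$ yields $s = \sum_{j \leq i} f_j r_j$ with $r_j \in R$, and applying the $S$-linear operator $\rho$ gives $s = \rho(s) = \sum_{j \leq i} f_j \rho(r_j) \in (f_1, \ldots, f_i) S$; this yields (1). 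Finally $R/(\underline f) R = R \otimes_B A$ is a free $A$-module, giving (4), and its graded Hilbert series equals that of $(R/\m R)/(\bar f_1, \ldots, \bar f_m)$, namely $(1 + t + \cdots + t^{e-1})^m$, from which (3) follows. The main technical obstacle is the freeness lift from $R/\m R$ over $B/\m B$ to $R$ over $B$: $R$ is not finitely generated over $A$ and $A \to A/\m$ is not flat, so one must work one graded piece at a time in order to apply standard Nakayama to the finitely generated $A$-modules $R_n$ and $K_n$.
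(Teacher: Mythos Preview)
Your proof is correct but organized differently from the paper's. Both begin by lifting a homogeneous system of parameters $\bar f_1,\ldots,\bar f_m$ of $(R/\m R)^G$ (furnished by Proposition~\ref{hsop}) to elements $f_i\in S_e$ via the Reynolds operator. From there the paper argues more directly: it adjoins a system of parameters $y_1,\ldots,y_d$ of $A$ to $f_1,\ldots,f_m$ to obtain a full homogeneous system of parameters of the $*$-local Cohen--Macaulay ring $R$, whence the entire sequence---and in particular $\underline f$---is $R$-regular; regularity in $S$ is then quoted from \cite[6.4.4]{BH}, part~(3) from the Hilbert series of $k[X]/(\bar{\underline f})$ plus Nakayama, and part~(4) from Auslander--Buchsbaum applied to the maximal Cohen--Macaulay $A$-module $R/(\underline f)$ of finite projective dimension. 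You instead introduce the auxiliary graded polynomial ring $B=A[T_1,\ldots,T_m]$, prove via a degree-wise Nakayama argument that $R$ is graded-free over $B$, and then read off (1), (3), (4) uniformly by tensoring with $B/(T_1,\ldots,T_m)=A$. Your route yields the stronger intermediate statement that $R$ is $B$-free and avoids both the $*$-local CM formalism and Auslander--Buchsbaum; the paper's route is shorter and stays within standard local-algebra machinery without needing the explicit freeness lift.
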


\begin{proof}
Set $k = A/\m$. Note $k$ has characteristic zero. Note we have a natural group homomorphism $GL_m(A) \rt GL_m(k)$ and so we have a
group homomorphism $G \rt GL_m(k)$. Set $U = k[X_1,\ldots, X_m]$ and let $G$ act linearly on $U$. Set $V = U^G$. We have
obvious graded ring homomorphisms $\alpha \colon R \rt U$ and $\beta \colon S \rt V$. Clearly $\alpha$ is surjective.

\textit{Claim:} $\beta$ is surjective.\\
Let $\xi \in V$ be homogeneous. As $\alpha$ is surjective  we can choose $\theta \in R$ homogeneous with $\ov{\theta} = \xi$ and  $\deg \theta = \deg \xi$.
We note that if $\rho^R_S \colon R \rt S$ and $\rho_V^U \colon U \rt V$ are the Reynolds operators then
\[
\ov{\rho^R_S(\theta)} = \rho^U_V(\ov{\theta}) = \rho^U_V(\xi) = \xi.
\]
Notice $\rho^R_S(\theta) \in S$ and its degree equals $\deg \theta = \deg \xi$. Thus $\beta$ is surjective. 

1) and 2): Choose $g_1,\ldots, g_m$ a homogeneous system of parameters of $V$ with $\deg g_i = |G|!$ for all $i$. As $U$ is a finite $V$-module
we note that $g_1,\ldots, g_m$ also form a homogeneous system of parameters of $U$. 
As $\beta$ is surjective we may choose homogeneous $f_i \in S$ with $\ov{f_i} = g_i$ for all $i$.  
It follows that
$\sqrt{\m R + (f_1,\ldots, f_m)R}  = \mathcal{M}$, the maximal homogeneous ideal of $R$. If $y_1,\ldots, y_d$ is a system of parameters
of $A$ it follows that $y_1,\ldots, y_d, f_1,\ldots, f_m$ forms a homogeneous system of parameters of $R$. As $R$ is $*$-local \CM \ ring we get
that it is also an $R$-regular sequence. It follows that $f_1,\ldots, f_m$ is a regular sequence in $R$. By \cite[6.4.4]{BH}
we get that $\underline{f}$ is also a regular sequence in $S$.

3) Set $c = |G|!$.  We note that $g_1,\ldots, g_m$ is a $U$-regular sequence. By looking at Hilbert-Series we conclude
\[
(U/\underline{g})_j = \begin{cases}
                        0  & \text{if} \  j > (c-1)^m \\
                        \neq 0 & \text{for } \ j = 0,1,\ldots, (c-1)^m.
                         \end{cases}
\]
Using Nakayama's lemma the result follows. 

4) Note $R/(\underline{f})$ is a finitely generated  \CM $A$-module.
By taking a projective resolution of $R/(\underline{f})$ (as $R$-modules) we get that
$\projdim_A (R/(\underline{f}))_i < \infty $ for all $i$. So $\projdim R/(\underline{f})$ is finite.
Also $\dim_A R/(\underline{f}) = \dim A$ (as $\left(R/(\underline{f})\right)_0 = A$. 
So $R/(\underline{f})$ is a free $A$-module.

\end{proof}

\s Let $A$ be a Noetherian ring containing a field of characteristic zero and let $R = A[X_1,\ldots, X_m]$. Let $D = A_m(A)$ be the $m^{th}$-Weyl algebra over $A$
 For the notion of generlized Eulerian $D$-modules see \cite{TP2}.
 \section{Skew Group rings and group actions on rings of differential operators}
 
In this section we describe some preliminary results on skew group rings that we need. We also describe conditions when group action over a ring $R$  can be extended to certain ring of differential operators over $R$.  
 
\s \label{Setup} In this section $A$ is a ring (not necessarily commutative) and $G$ is a finite subgroup of $Aut(A)$; the group of automorphisms of $A$. We assume that $|G|$ is invertible in $A$. 
 \s Recall that the skew group ring of $A$ \wrt \ $G$ is 
$$A*G = \{ \sum_{\sigma \in G} a_\sigma \sigma \mid a_{\sigma} \in A \ \text{for all} \ \sigma \},$$
with multiplication defined as
\[
(a_{\sigma}\sigma)(a_\tau \tau) = a_{\sigma}\sigma(a_{\tau})\sigma \tau.
\]  

\begin{remark}
An $A*G$ module $M$ is precisely an $A$-module on which $G$ acts such that for all $\sigma \in G$,
\[
\sigma(am) = \sigma(a)\sigma(m) \quad \text{for all} \ a \in A \ \text{and} \ m \in M.
\] 
\end{remark}

\begin{definition}
Let $M$ be an $A*G$-module. Then
\[
M^G = \{ m \in M \mid \sigma(m) = m \ \text{for all} \ \sigma \in G \}.
\]
\end{definition}
In particular set $A^G$ to be the ring of invariants of $G$. Clearly $M^G$ is an $A^G$-module.

For further properties of ring of invariants pertinent to our investigation see \cite{TWP}.

\s \label{graded-action} (\textit{Group action on Graded rings}:) If $A$ is graded, say $A= \bigoplus_{n \in \Z}A_n$  then we assume further that that $\sigma(A_n) \subseteq A_n$ for all $n$. In this case $A*G$ is graded with grading defined by, for $a \in A_n$ and $\sigma \in G$ set $\deg a\sigma = n$.
If $A$ is graded then we consider graded $A*G$ modules, i.e., an $A*G$ module 
$M$ which is graded say $M = \bigoplus_{n \in \Z}M_n$ such that $\sigma(M_n) \subseteq M_n$ for all $n$. In this case $M^G$ is a graded $A^G$-module.
Furthermore we note that $G$ acts on $M_n$ and we have an equality
\[
(M_n)^G \cong (M^G)_n \quad \text{for all} \ n \in \Z.
\] 
\s Now assume $A$ is a commutative Noetherian ring. In \cite[Corollary 3.3]{TWP} it is proved that if $I_1, I_2, \cdots, I_r$ are ideals in $A^G$ then
for all $ i_j \geq 0$, where $j = 1,\ldots,r$ the local cohomology module
$H^{i_1}_{I_1A}(H^{i_2}_{I_2A}(\cdots (H^{i_r}_{I_rA}(A))\cdots )$ is an $A*G$-module. Furthermore we have an isomorphism of $A^G$-modules
\[
H^{i_1}_{I_1A}(H^{i_2}_{I_2A}(\cdots (H^{i_r}_{I_rA}(A))\cdots )^G \cong  H^{i_1}_{I_1}(H^{i_2}_{I_2}(\cdots (H^{i_r}_{I_r}(A^G))\cdots ).
\]

\s \label{graded-loc} Now further assume that  $A = \bigoplus_{n \in \Z}A_n$ is graded and $G$ preserves grading as in \ref{graded-action}. Let $I_1,\ldots, I_r$ be graded ideals of $A^G$. Then by proof of  Theorem 3.2 \cite{TWP} it follows that $H^{i_1}_{I_1A}(H^{i_2}_{I_2A}(\cdots (H^{i_r}_{I_rA}(A))\cdots )$ are \emph{graded} $A*G$-module. Furthermore we have \emph{a graded isomorphism:}
of $A^G$-modules
\[
H^{i_1}_{I_1A}(H^{i_2}_{I_2A}(\cdots (H^{i_r}_{I_rA}(A))\cdots )^G \cong  H^{i_1}_{I_1}(H^{i_2}_{I_2}(\cdots (H^{i_r}_{I_r}(A^G))\cdots ).
\]

\s  Let $A$ be a (not necessarily commutative) Noetherian ring containing a field $K$ of characteristic zero. Now assume that $R = A[X_1,\ldots, X_m]$ and that we have a finite group $G$ with a group homomorphism $G \rt GL_m(A)$. We let $G$ act linearly on $R$.
Let $D(R)$ be the ring of $A$-linear differential operators on $R$. Note $D(R) \cong A_m(A)$ the $m^{th}$-Weyl algebra over $A$. We also note that
$A_m(A) \cong A \otimes_K A_m(K)$. We recall a natural action of $G$ on $D(R)$
and then consider the ring of invariants $D(R)^G$. All the assertions regarding $D(R)$ and $D(R)^G$ in this paper follow from \cite[Section 8]{TWP} where it was proved when $A$ is a field of characteristic zero. The same proof's work in general.

\s We first recall the construction of $D(R)$ as a subring of $ S = \Hom_A(R,R)$. The composition of two elements $P, Q$ of $S$ will be denoted as $P\cdot Q$. The commutator of
$P$ and $Q$ is the element 
\[
[P,Q] = P \cdot Q - Q \cdot P.
\]
We have natural inclusion $\eta \colon R \rt S$ where $\eta(r) \colon R \rt R$ is multiplication by $r$.

Set $D_0(R) = R$ viewed as a subring of $S$. For $i \geq 1$ set
\[
D_i(R) = \{ P \in S \mid [P,r] \in D_{i-1}(R) \}.
\]
Elements of $D_i(R)$ are said to be $A$-linear differential operators on $R$ of degree $\leq i$.
Notice $D_{i+1}(R) \supseteq D_i(R)$ for all $i \geq 0$. Set
\[
D(R) = \bigcup_{i \geq 0}D_i(R).
\]
This is the ring of $K$-linear differential operators on $R$. It can be shown that 
$D(R) \cong A_n(K)$.  Set $D(R)_{-1} = 0$. Note that the graded ring
\[
\g(D(R)) = \bigoplus_{i\geq 0}D(R)_i/D(R)_{i-1} = R[\overline{\partial_1},\cdots\overline{\partial_n}],
\]
is isomorphic to the polynomial ring in $n$-variables over $R$.
\s \label{def-action} We define action of $G$ on $D(R)$ as follows. Let $\theta \in D(R)_i$. Let $g \in G$. Define
\begin{align*}
g\theta \colon R &\rt R \\
           r &\rt g\cdot\theta(g^{-1}r). \\
\end{align*}
It can be verified that $g\theta \in D(R)_i$. Thus we have an action of $G$ on $D(R)$. It is easily verified we have a homomorphism  $G \rt Aut(D(R))$.

Let $\Phi \colon G \rt GL_n(A)$ be the original map. 
 It can be verified that the $G$ action on $T=A[\partial_1, \ldots, \partial_m]$
is induced by the  map $\Phi^* \colon G \rt GL_n(A)$ defined by $\Phi^*(g) =  (\Phi(g)^{-1})^t$. Note that $G$ acts linearly on $T$
 
\s By proof of \cite[Proposition 8.6]{TWP} we get that $R$ is a  graded $D(R)*G$ module.
Then by proof of  Theorem 8.8 \cite{TWP} it follows that $H^{i_1}_{I_1A}(H^{i_2}_{I_2A}(\cdots (H^{i_r}_{I_rA}(A))\cdots )$ are \emph{graded} $D(R)*G$-module. Furthermore we have \emph{a graded isomorphism:} of $D(R)^G$-modules
\[
H^{i_1}_{I_1R}(H^{i_2}_{I_2R}(\cdots (H^{i_r}_{I_rR}(R))\cdots )^G \cong  H^{i_1}_{I_1}(H^{i_2}_{I_2}(\cdots (H^{i_r}_{I_r}(S))\cdots ).
\]

\s \label{diff-op-local-1} Now consider the case when $A = K[[Y_1,\ldots, Y_d]]$ where $K$ is a field of characteristic zero. Let $D_K(A) = A<\delta_1,\ldots, \delta_d>$ where $\delta_i = \partial/\partial Y_i$ is the ring $K$-linear differential operators over $A$. We first give an example which shows that if
$G$ is a finite subgroup of $GL_n(A)$ then the $A$-linear action over $R = A[X_1,\ldots, X_d]$ \emph{need not} extend to a $D_K(A)$-linear action over $D_K(A)[X_1,\ldots, X_m]$.
\begin{example}
Let $ A = K[[Y]]$ and let $R = A[X_1, X_2]$. Consider the following subgroup  $G$ of $GL_2(A)$ where 
\[
G = \left\{ 1, \sigma = \begin{pmatrix}
1 & Y \\  0 & -1
\end{pmatrix} \right\}.
\]
Let $B = D_K(A) = A<\delta> $ where $\delta = \partial/\partial Y$. If $G$ extends 
to a $B$-linear action over $B[X_1, X_2]$ then we will have
\[
\sigma (X_2) \delta = \sigma(X_2 \delta) = \sigma (\delta X_2) = \delta \sigma(X_2). 
\]
An easy computation yields a contradiction.
\end{example}

 \s \label{diff-op-local-2}(with hypotheses as in \ref{diff-op-local-1}:) The problem with the above example was that entries 
 in $G$ did not belong to the center of $D_K(A)$. This problem vanishes if we further assume $G$ is a finite  subgroup of
 $GL_m(K)$. In this case it is elementary to see that we have a $D_K(A)$-linear action over $D_K(A)[X_1,\ldots, X_m]$
 which extends the $A$-linear action of $G$ over $R = A[X_1,\ldots, X_m]$. Now let $\D = A_m(D_K(A))$ the $m^{th}$-Weyl algebra
 over $D_K(A)$. Notice $\D \cong D_K(A)\otimes_K A_m(K)$. It is routine to verify that $R$ is a graded $\D*G$-module. Then by a 
 \textit{tedious but routine}  
   computation  it follows that $H^{i_1}_{I_1R}(H^{i_2}_{I_2R}(\cdots (H^{i_r}_{I_rR}(R))\cdots )$ are \emph{graded}
   $\D*G$-module. Furthermore we have \emph{a graded isomorphism} of $\D^G$-modules:
\[
H^{i_1}_{I_1R}(H^{i_2}_{I_2R}(\cdots (H^{i_r}_{I_rR}(R))\cdots )^G \cong  H^{i_1}_{I_1}(H^{i_2}_{I_2}(\cdots (H^{i_r}_{I_r}(S))\cdots ).
\]
\section{Proof of Theorem \ref{vanish}}
 In this section we state and prove the following general result.
 \begin{theorem}\label{vanish-main}[with the hypothesis as in setup \ref{setup}]
Let $I_1, \ldots, I_r$ be homogeneous ideals in $S$. Set $M= H_{I_1}^{i_1}(H_{I_2}^{i_2} \cdots (H_{I_r}^{i_r}(S) \cdots))= \bigoplus_{i \in \Z}M_i$. If $M_{i_0} \neq 0$ for some $i_0$, then $M_i \neq 0$ for infinitely many $i$.
 \end{theorem}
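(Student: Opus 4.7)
The plan is to pass to the $R$-side via the isomorphism of \ref{graded-loc}, exploit the $D \ast G$-module structure introduced in Section~3, and combine $G$-invariant ``raising'' operators $h_j = \prod_{g\in G} g(X_j)$ and ``lowering'' operators $\eta_j = \prod_{g\in G} g(\partial_j)$ with the generalized Eulerian property to force infinite support whenever $M$ is nonzero.

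First, by the graded isomorphism of \ref{graded-loc} (applied with $R, S$ in place of $A, A^G$ there), I will write
\[
M \cong N^G, \qquad N \;=\; H^{i_1}_{I_1R}\bigl(H^{i_2}_{I_2R}\bigl(\cdots H^{i_r}_{I_rR}(R)\cdots\bigr)\bigr).
\]
Since each $I_jR$ is $G$-invariant, the discussion following \ref{def-action} endows $N$ with the structure of a graded $D \ast G$-module, where $D = A_m(A)$ is the Weyl algebra; consequently $M = N^G$ is a graded $D^G$-module. The Euler operator $E = \sum_i X_i\partial_i \in D$ is $G$-invariant, so $E \in D^G$, and by \cite{TP2} (extended in \cite{TPSR1} to the Noetherian setting) $N$ is generalized Eulerian, so $(E-n)$ acts nilpotently on $N_n$; this property descends to $M_n \subseteq N_n$.

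Next, I argue by contradiction: assume $M_i = 0$ for $|i| > N_0$ while $M_{i_0} \neq 0$. The elements
\[
h_j \;:=\; \prod_{g \in G} g(X_j) \;\in\; S_{|G|}, \qquad \eta_j \;:=\; \prod_{g \in G} g(\partial_j) \;\in\; D^G_{-|G|}
\]
are both nonzero (being products of nonzero linear factors). Choose a nonzero $x \in M_{i_0}$ which is a genuine eigenvector of $E$ with eigenvalue $i_0$, available since $(E-i_0)$ is nilpotent on $M_{i_0}$. For all sufficiently large $k$, $h_j^k \cdot x \in M_{i_0 + k|G|} = 0$ and $\eta_j^k \cdot x \in M_{i_0 - k|G|} = 0$, and hence $[\eta_j^k, h_j^k] \cdot x = 0$ in $M_{i_0}$.

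The hard step, and what I expect to be the main obstacle, is showing that the commutator $[\eta_j^k, h_j^k] \in D^G_0$ does not annihilate $x$ for some such $k$. In the trivial-group, single-variable case this reduces to the classical Weyl-algebra identity $[\partial^n, X^n] = \prod_{i=1}^n(E+i) - \prod_{i=0}^{n-1}(E-i)$, a nonzero polynomial in $E$ whose value at every integer is nonzero, yielding an immediate contradiction. For general $G$ and $m$, I plan to exploit the commutation relations $[E, h_j] = |G|\,h_j$ and $[E, \eta_j] = -|G|\,\eta_j$ together with an inductive analysis (passing, if needed, to the associated graded algebra of $D$ under the differential-order filtration) to identify a nonzero leading component of $[\eta_j^k, h_j^k]$ that acts on the $E$-eigenspace at $i_0$ by a nonzero polynomial expression in $i_0$, remaining nonzero at every integer for all $k$ sufficiently large. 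Varying $j$ and combining several choices if necessary forces the contradiction, yielding $M = 0$ and hence the theorem.
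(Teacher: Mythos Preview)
Your setup is correct up to the point where you pick the specific invariants $h_j=\prod_g g(X_j)$ and $\eta_j=\prod_g g(\partial_j)$, but the commutator step is a genuine gap that does not close. Already for trivial $G$ and $m\geq 2$ the identity you invoke gives $[\partial_j^k,X_j^k]$ as a polynomial in the \emph{partial} Euler operator $E_j=X_j\partial_j$, not in $E=\sum_i X_i\partial_i$; the generalized Eulerian condition controls only the action of $E$, so knowing $Ex=i_0x$ tells you nothing about $E_jx$, and hence nothing about $[\eta_j^k,h_j^k]x$. For nontrivial $G$ the factors $g(X_j)$ and $g(\partial_j)$ mix all variables, and the commutator lands in the full degree-zero piece $D_0$, which is far larger than $A[E]$. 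The relations $[E,h_j]=|G|h_j$ and $[E,\eta_j]=-|G|\eta_j$ merely restate the degree and give no extra leverage; passing to the associated graded by differential order kills the commutator altogether, so that route cannot isolate a ``leading component'' acting as a nonzero polynomial in $i_0$. In short, there is no mechanism here that forces $[\eta_j^k,h_j^k]x\neq 0$, and concrete modules such as $H^m_{(X_1,\ldots,X_m)}(R)$ show that your fixed $h_j$ can genuinely annihilate elements.

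The paper avoids commutators entirely and instead produces a $G$-invariant \emph{nonzerodivisor} on (a quotient of) $N$. For $i_0\geq -m+1$ one replaces $N$ by $\overline N=N/\Gamma_{(\underline X)}(N)$; the generalized Eulerian structure forces $\Gamma_{(\underline X)}(N)_j=0$ for $j\geq -m+1$, so $\overline N_j=N_j$ there. After a faithfully flat base change to ensure an uncountable coefficient field, countable prime avoidance in $R_1$ produces a linear form $\xi$ regular on $R\oplus\overline N$, and then $\theta=\prod_{\sigma\in G}\sigma(\xi)\in S_+$ is a $G$-invariant nonzerodivisor giving injections $M_i\hookrightarrow M_{i+|\theta|}$ for all $i\geq -m+1$. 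The case $i_0\leq -m$ is handled dually via $T=A[\partial_1,\ldots,\partial_m]$ and $\Gamma_{(\underline\partial)}(N)$. The essential point your proposal misses is that one cannot fix the invariant in advance: it must be chosen to avoid the (countably many) associated primes of $\overline N$, which is exactly what the uncountable-field prime-avoidance argument accomplishes.
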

Before proving this let us note that  Theorem \ref{vanish-main} implies Theorem \ref{vanish}.
 
\begin{proof}
Set $N= H_{I_1R}^{i_1}(H_{I_2R}^{i_2} \cdots (H_{I_rR}^{i_r}(R) \cdots))$. Then $N$ is a graded $R*G$-module and $N^G=M$. We note that $N$ is a generalized Eulerian $A_m(A)$-module by \cite[Theorem 3.6]{TP2}. 

{\it Case} 1: $i_0 \geq -m + 1$.

We have a short exact sequence of $A_m(A)$-modules 
\[0 \to \Gamma_{(\underline{\bf X})}(N) \to N \to \overline{N} \to 0.\] Now by \cite[Proposition 4.5]{TPSR1} we 
have $\Gamma_{(\underline{\bf X})}(N)_j=0$ for $j \geq -m+1$. So $\overline{N_j}=N_j$ for $j \geq -m+1$. 
Now $\Ass_R \overline{N}= \{P \in \Ass_R N~|~ P \nsupseteq R_+\}$.  If $A$ contains only a countable field, then set $B= A[[\underline{\bf X}]]_X$. 
Now $B \otimes_A \left(H_{I_1}^{i_1}(H_{I_2}^{i_2} \cdots (H_{I_r}^{i_r}(S) \cdots))\right)= H_{I_1B}^{i_1}(H_{I_2B}^{i_2} \cdots (H_{I_rB}^{i_r}(B \otimes_A S) \cdots)$. 
Also $B \otimes_A S= \left(B[\underline{\bf X}]\right)^G$ by Lemma 
\ref{flat} and $M \otimes_A B= \bigoplus_{i \in \Z} (M_i \otimes_A B)$.
Since $A \to B$ is a faithfully flat extensions so we have $M_j \neq 0$ if and only if $M_j \otimes_A B \neq 0$.
Thus we can assume that $A$ contains a uncountable field, say $K$. Now by \cite[Lemma 5.5]{TPSR1} we have
$N$ is countably generated as an $R$-module. So $\overline{N}$ is countably generated as an $R$-module.
So $\Ass_R \overline{N}$ is a countable set.  Note $L=K[X_1, \ldots, X_m] $ is a subring of $R$. 
Let $\Ass R= \{Q_1, \ldots Q_s\}$. Then $Q_i$'s are graded and $Q_i \nsupseteq R_+$.
Now $L_1 \neq P \cap L_1$ for any $P \in \Ass_R \overline{N}$ and $L_1 \neq Q_j \cap L_1$ for $Q_j \in \Ass R$. 
As $K$ is uncountable there exists some
$$\xi \in L_1 \setminus\left(\bigcup_{P \in \Ass_R \overline{N}}(P \cap L_1) \cup (\bigcup_{Q \in \Ass R} Q \cap L_1)\right).$$
Then $\xi \in R_1$ is a non-zero divisor in $R$ and on $\overline{N}$. As $R$ and $\overline{N} $ are $R*G$-modules so 
$\sigma(\xi)$ is also a non-zero divisor on $R \oplus \overline{N}$ for all $\sigma \in G$. 
Hence $\theta= \prod_{\sigma \in G}\sigma(\xi)$ is also a non-zero divisor on $R \oplus \overline{N}$. 
Note $\theta \in S_+$ and denote $\deg \theta=|\theta|$. Since $N_i=\overline{N_i}$ for $i \geq -m+1$ so we get $N_i \hookrightarrow N_{i+|\theta|}$ for all $i \geq -m+1$. Taking invariant's we get $M_i \hookrightarrow M_{i+|\theta|}$ and the result follows in this case.

{\it Case} 2: $M_{i_0}\neq 0$ for some $i_0 \leq -m$.

We have a short exact sequence \[0 \to \Gamma_{(\underline{\bf \partial})}(N) \to N \to N' \to 0.\] of 
generalized Eulerian $A_m(A)$-modules. Now from \cite[Proposition 4.5]{TPSR1} we have
$\Gamma_{(\underline{\bf \partial})}(N)_j=0$ for $j \leq -m$. So $N_j= N'_j$ for $j \leq -m$. 
Set $T=A[\partial_1, \ldots, \partial_m]$ where $\deg \partial_i=-1$ for all $i$ and $T_{-1}= \bigoplus_{j<0}T_j$.
If $Q \in \Ass_T N'$ then $Q \nsupseteq T_{-1}$.
As before we may assume that $A$ contains uncountable field $K$.
Set $L= K[\partial_1, \ldots, \partial_m]$.  Now we have $\Ass_L N'= \Ass_T N' \cap L$ (as $L \to T$ is a ring homomorphism).
As before there exists some 
$$\xi \in L_{-1} \setminus \left( \bigcup_{P \in \Ass_T N'}P \cap L_{-1}\right) \cup \left( \bigcup_{Q \in \Ass T} Q \cap L_{-1}\right).$$ 
Then $\xi$ is a non-zero divisor on $N'$ and $T$. So $\sigma(\xi)$ is also $N' \oplus T$-regular for all $\sigma \in G$. 
Set $\delta= \prod_{\sigma \in G} \sigma(\xi)$. Then $\delta \in T^G_{-}$ and
$N_i \hookrightarrow N_{i-|\delta|}$ for all $i\leq -m$. Taking invariants  the result follows.
\end{proof}

\section{proof of Theorem \ref{vanish-test}}
\s \label{eclair}{\it Construction} 1: We assume $(A, \m)$ is a \CM \ ring containing a field of characteristic zero. 
Let $G$ be a finite group and $\Phi: G \to Gl_m(A)$ be a group homomorphism.
Let $G$ acts on $R=A[\underline{\bf X}]$ linearly (fixing $A$). 
Choose $f_1, \ldots, f_m \in S_+$  as in Proposition \ref{grade}. 

 Let $\D=D_A(R) = A_m(A)$ be the $m^{th}$ Weyl algebra over $A$. 
 The $G$ action on $R$ can be extended to $\D$ and in particular
 to
 $T=A[\partial_1, \ldots, \partial_m]$ with $\deg \partial_i= -1$. 
 As discussed earlier $G$ acts linearly on $T$ fixing $A$.
 Similarly as Proposition \ref{grade} we can show that there exists homogeneous invariants $g_1, \ldots, g_m$ with $\deg g_i = -|G|!$ such that
 it is  a  homogeneous $T$-regular sequence (and hence a $T^G$-regular sequence). Furthermore
 $\ov{T} = T/(g_1,\ldots, g_m)T$ is a graded free $A$-module with $\ov{T}_j = 0$ for $j < -|G|^m$.
 We call $\underline{f}$ to be { \it a set of  fundamental invariants} of $A$ and $G$ and 
 we call $\underline{g}$ to be {\it a set of dual fundamental invariant} of $A$ and $G$.

Theorem \ref{vanish-test} follows from the following more general result.
\begin{theorem}\label{ref-vanish-test-main}[with notation as in \ref{std}] Further assume $A$ is Cohen-Macaulay.
Set $c = |G|!$.
Set $M= H_{I_1}^{i_1}(H_{I_2}^{i_2}(\cdots (H_{I_l}^{i_l}(S))\cdots))= \bigoplus_{n \in \Z}M_n$ where
$I_1, \ldots, I_l$ are homogeneous ideals in $S$. If $M_i=0$ for $i \in [-m-(c-1)^m, (c-1)^m]$ then $M=0$.
\end{theorem}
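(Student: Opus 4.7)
The plan is to lift $M$ to the iterated local cohomology $N = H^{i_1}_{I_1 R}(\cdots H^{i_l}_{I_l R}(R)\cdots)$ over $R$ and exploit the generalized Eulerian $A_m(A)$-module structure, mirroring the proof of Theorem \ref{vanish-main} but with a quantitative refinement furnished by the fundamental invariants of Proposition \ref{grade}. By \ref{graded-loc}, $N$ is a graded $R*G$-module with $N^G \cong M$ as graded $S$-modules, and by \cite[Theorem 3.6]{TP2} it is generalized Eulerian over $A_m(A)$. Set $\overline{N} = N/\Gamma_{(\underline{\bf X})}(N)$ and $N' = N/\Gamma_{(\underline{\partial})}(N)$; by \cite[Proposition 4.5]{TPSR1}, $\overline{N}_j = N_j$ for $j \geq -m+1$ and $N'_j = N_j$ for $j \leq -m$, and the $G$-invariants $\overline{M} = \overline{N}^G$ and $M' = (N')^G$ satisfy the analogous identities.

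For the positive half, use the fundamental invariants $\underline{f} = f_1,\ldots,f_m \in S$ of Proposition \ref{grade}, of degree $c = |G|!$, with $R/\underline{f}R$ a free $A$-module of finite rank supported in degrees $[0,(c-1)^m]$. As in the proof of Theorem \ref{vanish-main}, I first reduce via the faithfully flat base change of \ref{use-flat}(3) to the case of an uncountable coefficient field, and then iterate the prime-avoidance construction from Theorem \ref{vanish-main} $m$ times to produce an $\overline{N}$-regular sequence $\underline{\theta} = \theta_1,\ldots,\theta_m \in S_c$: at step $i$, pick a linear form in $K[\underline{X}]_1$ avoiding the countably many associated primes of $\overline{N}/(\theta_1,\ldots,\theta_{i-1})\overline{N}$ and of the analogous quotient of $R$, symmetrize over $G$, and raise to the power $(|G|-1)!$ to secure degree exactly $c$. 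Exactness of $G$-invariants in characteristic zero then gives that $\underline{\theta}$ is $\overline{M}$-regular and that $(\overline{N}/\underline{\theta}\overline{N})^G = \overline{M}/\underline{\theta}\overline{M}$. The crucial finishing step is to show that $\overline{M}/\underline{\theta}\overline{M}$ is supported in degrees $[0,(c-1)^m]$; granting this, the hypothesis $\overline{M}_j = 0$ for $j \in [-m+1,(c-1)^m]$ forces $\overline{M}/\underline{\theta}\overline{M} = 0$, so $\overline{M}_j = \sum_i \theta_i \overline{M}_{j-c}$ in every degree $j$. An induction on $j \geq -m+1$, with base the vanishing in the critical range, then yields $\overline{M}_j = 0$ for all $j \geq -m+1$.

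The negative half is entirely symmetric, using the dual fundamental invariants $\underline{g} \in T^G$ of degree $-c$ from Construction \ref{eclair} in place of $\underline{f}$, and the module $N'$ in place of $\overline{N}$; the hypothesis $M_j = 0$ for $j \in [-m-(c-1)^m,-m]$ then forces $M_j = 0$ for all $j \leq -m$. Combining the two halves, $M_j = 0$ for every $j$ and hence $M = 0$.

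The main technical obstacle is the support bound on $\overline{M}/\underline{\theta}\overline{M}$: since $\overline{M}$ is typically not finitely generated over $S$, standard graded Nakayama is unavailable. The plan is to combine the freeness of $R$ (of finite rank) over the polynomial subring $A[\underline{\theta}] \subseteq S$ with the generalized Eulerian $A_m(A)$-module structure on $N$ to show that $\overline{N}/\underline{\theta}\overline{N}$ is a finitely generated $R/\underline{\theta}R$-module, inherited from the $A_m(A)$-generators of $N$, and hence supported in the same bounded range of degrees as $R/\underline{\theta}R$; the Reynolds operator then descends this bound to $\overline{M}/\underline{\theta}\overline{M}$. A secondary issue is the adaptation of the prime-avoidance step of Theorem \ref{vanish-main} to produce invariant elements of degree exactly $c = |G|!$ (rather than $|G|$), which is handled by the $(|G|-1)!$-power trick just described.
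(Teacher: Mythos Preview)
Your strategy diverges from the paper's at the decisive point, and the divergence introduces a real gap. The paper does \emph{not} attempt to manufacture an $\overline{N}$-regular sequence by prime avoidance; instead it works directly with $N$ and with the fundamental invariants $\underline{f}$ of Proposition \ref{grade}, without ever asking whether $\underline{f}$ is $N$-regular. The key observation is that $H_i(\underline{f};N) \cong \Tor_i^R(N, R/(\underline{f}))$, and $R/(\underline{f})$ admits a finite filtration whose graded subquotients are direct sums of shifts $\bigl(R/(\underline{X})\bigr)(-j)$ with $0 \le j \le (c-1)^m$. Since $N$ is generalized Eulerian, one has $H_i(\underline{X};N)_j = \Tor_i^R(N, R/(\underline{X}))_j = 0$ for $j \neq 0$ (and dually $H_i(\underline{\partial};N)_j = 0$ for $j \neq -m$). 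Running the filtration gives $H_i(\underline{f};N)_j = 0$ for $j \notin [0,(c-1)^m]$, and taking $G$-invariants yields the same for $H_i(\underline{f};M)$. Then $H_0(\underline{f};M)_j = 0$ for $j > (c-1)^m$ already furnishes the surjection $(M_{j-c})^m \twoheadrightarrow M_j$, and the downward induction goes through. No passage to $\overline{N}$, no uncountable base change, no regularity on $N$ is needed.

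Your proposed route, by contrast, hinges on the claim that $\overline{N}/\underline{\theta}\,\overline{N}$ is finitely generated over $R/\underline{\theta}R$, ``inherited from the $A_m(A)$-generators of $N$''. Under the hypotheses of this theorem $A$ is only Cohen--Macaulay, not regular, so $N$ has no reason to be finitely generated over $A_m(A)$; and even when it is, finite generation of $N$ over $A_m(A)$ does not descend to finite generation of $\overline{N}/\underline{\theta}\,\overline{N}$ over $R/\underline{\theta}R$ in any evident way (the $\partial_i$'s do not act on the quotient). Without that finiteness you have no degree bound on $\overline{M}/\underline{\theta}\,\overline{M}$, and the induction cannot start. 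The concentration result $H_i(\underline{X};N)_j = 0$ for $j \neq 0$ is precisely what replaces this missing finiteness in the paper's argument, and you should use it together with the $\Tor$ filtration rather than trying to force regularity of a sequence on $\overline{N}$.
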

\begin{proof}
We may localize at $P$, a prime ideal of $A$. Thus we may assume $A$ is a \CM \  local ring. We do Construction 1 described above.
Set $\D=A_m(A)$ be the $m^{th}$ Weyl algebra over $A$. Now we have $H_i(\underline{\bf X}; N)_j=0$ for $j \neq 0$ and $H_i(\underline{\bf \partial}; N)_j=0$ for $j \neq -m$ where $N=H_{I_1R}^{i_1}(H_{I_2R}^{i_2}(\cdots (H_{I_lR}^{i_l}(R))\cdots))$. Moreover, we have $R/(f)= A \oplus \overline{R_1} \oplus \cdots \oplus \overline{R_d}$ where $\overline{R_i}= A^{l_i}$ with $l_i \geq 0$. Set $W_i= \overline{R_i} \oplus \cdots \oplus \overline{R_d}$ where $d = (c-1)^m$. We have short exact sequences 
\begin{align*}
& 0 \to W_1 \to R/(f) \to A=R/(X_1, \ldots, X_m) \to 0 \quad \text{and} \\
&0 \to W_2 \to W_1 \to R_1^* \to 0.
\end{align*}
Note $R_1^* \cong \left(R/(X_1, \ldots, X_m)\right)^{l_1}(-1)$ as $R$-modules.
Iterating we get \[0 \to W_d \to W_{d-1} \to R_{d-1}^* \to 0\] where $R_{d-1}^* \cong R/(X_1, \ldots, X_m)^{l_{d-1}}(-d+1)$
and $W_d \cong \left(R/(\underline{\bf X})\right)^{l_d}(-d)$ as $R$-modules. From the above short exact sequences
we get that $\Tor_i^R(N, W_d)=0$ for $i \neq d$ and $\Tor_i^r(N, R^*_{d-1})=0$ for $i \neq d-1$. Thus
$\Tor_i^R(N, W_{d-1})=0$ for $i \neq d, d-1$. Iterating we get \[\Tor_i^R(N, R/(f))_j=0 \quad \text{for } j \neq d, d-1, \ldots, 0.\]

Let $T=A[\partial_1, \dots, \partial_m]$ with $\deg \partial_i=-1$.
Set $T/(\underline{g})= \overline{T_{-r}} \oplus \overline{T_{-r+1}} \oplus \cdots \oplus \overline{T}_{-1} \oplus A$
where $r= (c-1)^m$,  and $\overline{T_j} \cong A^{h_j}$. Set $V_i= \overline{T_{-r}}\oplus \cdots \oplus \overline{T_i}$ for $i=0, -1, \ldots, -r$. Then we have short exact sequences of $T$-modules
\begin{align*}
&0 \to V_{-1} \to T/(\underline{g}) \to A=T/(\underline{\partial}) \to 0 \quad \text{and}\\
& 0 \to V_{-2} \to V_{-1} \to T_{-1}^* \to 0
\end{align*} 
where $T_{-1}^* \cong \left(T/(\underline{\partial})\right)^{h_{-1}}(-1)$ as $T$-modules. Iterating we get \[0 \to \overline{T_{-r}} \to V_{-r
+1} \to T_{-r+1}^* \to 0\]where 
$T_{-r+1}^* \cong \left(T/(\underline{\partial})\right)^{h_{-r+1}}(-r+1)$ and $\overline{T_{-r}} \cong \left(T/(\underline{\partial})\right)^{h_{-r}}(-r)$. Since \\  $\Tor_i^T(T/\underline{\partial}, N)_j =0$ for $j \neq -m$ so $\Tor_i^T(N, \overline{T_{-r}})_j=0$ for $j \neq -m-r$ which implies $\Tor_i^T(N, V_{-r+1})_j=0$ for $j \neq -m-r, -m-r+1.$ Iterating we get \[\Tor_i^T(N, T/(\underline{g}))_j=0 \quad \text{for } j \neq -m-r, -m-r+1, \ldots -m.\] 

Thus $H_i(\underline{g}, N)_j =0$ for $j \neq -m-r, \ldots, -m$ where $r = (c-1)^m$ and $H_i(\underline{f}, N)_j =0$ for $j \neq 0, 1, \ldots, (c-1)^m$ for all $i \geq 0$. The Koszul complex $\K(\underline{f}, N)$ is a complex of $R*G$-modules. Clearly $\K(\underline{f}, N)^G= \K(\underline{f}, M)$. Furthermore by \cite[2.8]{TP2} $G$ acts on $H_i(\underline{f}, N)$ and $H_i(\underline{f}, N)^G= H_i(\K(\underline{f}, M))= H_i(\underline{f}, M)$. It follows that $H_i(\underline{f}, M)_j=0$ for all $i \geq 0$ and $j \neq 0, 1, \ldots, (c-1)^m$. Similarly we get that $H_i(\underline{g}, M)_j=0$ for all $i \geq 0$ and $j \neq -m- (c-1)^m, \ldots, -m$.

Suppose $M_j=0$ for all $j \in [-m- (c-1)^m, (c-1)^m]$.
\begin{claim*}
$M_i=0$ for all $i$.
\end{claim*}

{\it Case} 1: Let $i \geq 0$. If $0 \leq i \leq (c-1)^m$ then by our assumption $M_i=0$. So let $i > (c-1)^m$. We have part of the Koszul complex $K(\underline{f}, M)$ \[ \left(M(-2c)\right)^{\binom{m}{2}} \to \left(M(-c)\right)^m \to M \to 0.\] We have $H_0(\underline{f}, M)_j=0$ for $j > (c-1)^m$. So we get exact sequence \[\left(M_{j-c}\right)^m \to M_j \to 0\] for all $j> (c-1)^m$. The result follows.

{\it Case} 2: Let $i<0$. If $-m- (c-1)^m \leq i \leq 0$ then $M_i=0$ by our assumption. Consider the part of the Koszul complex \[M(+c)^m \overset{(g_1, \ldots, g_m)}{\lrt} M \to 0.\] Now we have $H_0(\underline{g}, M)_j=0$ for all $j< -m- (c-1)^m$. Thus we get exact sequence \[\left(M_{j+c}\right)^m \to M_j \to 0\] for $j< -m - (c-1)^m$. The result follows.
\end{proof}

\section{Injective dimension and infinite generation}
\s \label{std-reg}\emph{Construction} 2: \\
Throughout $A$ is a regular ring containing a field $K$ of characteristic zero and let $G$ be a finite group with a 
homomorphism $G \rt GL_n(K)$. 
Let $P$ be a prime ideal of $A$. Set $C=A_P$ and $B=\widehat{A_P}$. We note $B[X_1, \ldots, X_m]^G \cong B \otimes_A (A[\underline{X}]^G)$. By Cohen-structure theorem $B=k(P)[[Y_1, \ldots, Y_d]]$ where $d=\hgt_A P$ and $k(P)$ is the residue field of $B$. We note that  we may assume that $K$ is a subring of $k(P)$; for instance see \cite[Pg.\ 73]{C}.
 Set $T=B[\underline{X}]$. Note $G$ acts on $A_m(T)= T \otimes_{k(P)} A_m(k(P))$. Let $D_B$ be $k(P)$-linear ring of
 differential operators over $B$. Note the action of $G$ on $T$ can be naturally extended to
$D_B[X_1,\ldots, X_m]$. As discussed earlier in \ref{diff-op-local-2} we have a natural $G$-action on $\D= A_m(D_B)$, the $m^{th}$-Weyl algebra over $D_B$ extending the $G$-action on $D_B[X_1,\ldots, X_m]$ fixing $D_B$. If $I_1, \ldots, I_s$ be homogeneous ideals in $T^G$ then set $N=H_{I_1T}^{i_1}(H_{I_2T}^{i_2}(\cdots(H_{I_sT}^{i_s}(T))\cdots))$ and $N^G=M= H_{I_1}^{i_1}(H_{I_2}^{i_2}(\cdots(H_{I_s}^{i_s}(T^G))\cdots))$. We note that it is obvious that $N$ is a $\D$-module and is also $G$-invariant. So $N$ is a $\D*G$-module and $N^G=M$.

\begin{lemma}\label{inj}
Let $B = k[[Y_1,\ldots, Y_d]]$  and let $T = B[X_1,\ldots, X_m]$. Let $G$ be a finite group with a group homomorphism $G \rt GL_m(k)$. We let $G$ act linearly on $T$ and let $T^G$ be the ring of invariants. 
Let $\n$ be maximal ideal of $B$ and let $E_B(k)$ denote the injective hull of $k$ 
as a $B$-module. Let $M = \oplus_{n \in \Z}M_n=\f(T)=H_{I_1}^{i_1}(H_{I_2}^{i_2}(\cdots(H_{I_s}^{i_s}(T^G))\cdots))$
where $I_1,\ldots, I_s$ are graded ideals of $T^G$. If $M_t$ is $\n$-torsion, then $M_t \cong E_B(k)^{s_t}$ where $s_t$
is possibly infinite.
\end{lemma}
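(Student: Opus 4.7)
The plan is to equip each graded component $M_t$ with a $D_B$-module structure and then invoke the structure theorem that any $\n$-torsion $D_B$-module is a direct sum of copies of $E_B(k)$.

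First I would recall from Construction~\ref{std-reg} that
\[
N := H^{i_1}_{I_1T}(H^{i_2}_{I_2T}(\cdots(H^{i_s}_{I_sT}(T))\cdots))
\]
is a graded $\D*G$-module with $\D = A_m(D_B)$, and that $N^G = M$ as graded $\D^G$-modules. The algebra $\D = D_B\langle X_1,\ldots,X_m,\partial_{X_1},\ldots,\partial_{X_m}\rangle$ is graded with $\deg X_i = 1$, $\deg \partial_{X_i} = -1$ and $D_B$ concentrated in degree zero; as pointed out in \ref{diff-op-local-2}, the $G$-action on $\D$ fixes $D_B$ pointwise. Hence $D_B \subseteq (\D^G)_0$, so $D_B$ acts on each graded piece $M_t$, making $M_t$ a $D_B$-module whose underlying $B$-module structure agrees with the one inherited from $N$.

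Next I would invoke the following standard fact: if $E$ is any $D_B$-module which is $\n$-torsion as a $B$-module, then $E \cong E_B(k)^{\oplus \kappa}$ for some (possibly infinite) cardinal $\kappa$. This is essentially a Kashiwara-type equivalence (the functor $E \mapsto (0 :_E \n)$ is an equivalence from the category of $\n$-torsion $D_B$-modules to the category of $k$-vector spaces), but one can also verify it directly: any nonzero socle element $v \in (0 :_E \n)$ generates under $D_B$ a submodule isomorphic to $E_B(k)$ (applying the derivations $\delta_1, \ldots, \delta_d$ to $v$ reproduces the Matlis generators of $E_B(k)$), and a Zorn-type argument indexed by a $k$-basis of $(0 :_E \n)$ then produces the required direct-sum decomposition.

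Applying this structure theorem to $M_t$---which is $\n$-torsion by hypothesis and a $D_B$-module by the first step---yields $M_t \cong E_B(k)^{s_t}$ with $s_t = \dim_k(0 :_{M_t} \n)$. The main obstacle is the structure theorem itself, but it is part of the standard toolkit for local cohomology of polynomial and invariant rings in characteristic zero; once it is in place, the rest of the proof is a formal consequence of the observation that $D_B$ sits in the degree-zero part of $\D^G$ and thus preserves the graded pieces of $M = N^G$.
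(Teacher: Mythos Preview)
Your proposal is correct and follows essentially the same route as the paper: show that $D_B \subseteq (\D^G)_0$ so that each $M_t$ is a $D_B$-module, and then apply the structure theorem for $\n$-torsion $D_B$-modules. The only difference is cosmetic: the paper cites \cite[2.4(a)]{Lyu-1} directly for the structure theorem, whereas you sketch the Kashiwara-type argument; and the paper invokes \ref{diff-op-local-2} rather than Construction~\ref{std-reg} for the $\D*G$-module structure on $N$, which is slightly cleaner since the lemma is already stated in the completed setting.
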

\begin{proof}
Set $N= H_{I_1T}^{i_1}(H_{I_2T}^{i_2}(\cdots(H_{I_sT}^{i_s}(T))\cdots))$.
Let $D_B$ be the ring of $k$-linear differential operators of $B$ and let $\D = A_m(D_B)$ be the $m^{th}$-Weyl algebra over $D_B$.
As discussed in \ref{diff-op-local-2} we have a natural $G$-action on $\D= A_m(D_B)$, the $m^{th}$-Weyl algebra over $D_B$ extending the $G$-action on $D_B[X_1,\ldots, X_m]$ fixing $D_B$. 
 Then $N$ is a $\D*G$-module  and $N^G \cong M$ as graded $\D^G$-modules.
 So $M_t$ is a $(\D^G)_0$-module. Note that $(\D^G)_0 \supseteq D_B$. So $M_t$ is a $D_B$-module annihilated by $\n$.
 Hence $M_t \cong E_B(k)^{s_t}$ for some ordinal possibly infinite see \cite[2.4(a)]{Lyu-1}.
\end{proof}

We need the following Lemma from \cite[1.4]{Lyu-1}.
\begin{lemma}\label{ilyu}
Let $A$ be a Noetherian ring and let $M$ be an $A$-module {\rm(}$M$ need not be finitely generated{\rm)}. Let $P$ be a prime ideal in $A$. If $(H_P^j(M))_P$ is injective $A$-module for all $j \geq 0$ then $\mu_j(P, M)= \mu_0(P, H_P^j(M))$.
\end{lemma}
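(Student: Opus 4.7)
The plan is to reduce to the local case at $P$ and then degenerate the standard Grothendieck spectral sequence that expresses $\Ext^\bullet_{A_P}(k(P),M_P)$ in terms of local cohomology. Since both sides of the desired identity are unaffected by localization at $P$, I first replace $A$ by $A_P$ and $M$ by $M_P$, using the identification $(H^j_P(M))_P \cong H^j_{PA_P}(M_P)$. After this reduction the hypothesis reads: each $H^q_{PA_P}(M_P)$ is an injective $A_P$-module.

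The key formal input is the following. For any $A_P$-module $N$ supported at the maximal ideal $PA_P$ one has $\Hom_{A_P}(N,-)=\Hom_{A_P}(N,\Gamma_{PA_P}(-))$, and $\Gamma_{PA_P}$ sends injectives to injectives. The Grothendieck spectral sequence for the composition of these two left-exact functors therefore yields
\[
E_2^{p,q}=\Ext^p_{A_P}\!\left(k(P),\, H^q_{PA_P}(M_P)\right)\;\Longrightarrow\;\Ext^{p+q}_{A_P}(k(P),M_P).
\]
This requires no finite generation on $M$; the construction rests only on an injective resolution of $M_P$.

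The injectivity hypothesis now finishes the argument: $\Ext^p_{A_P}(k(P),H^q_{PA_P}(M_P))=0$ for every $p\ge 1$, so $E_2$ is concentrated on the row $p=0$ and the spectral sequence collapses at $E_2$ to give
\[
\Ext^n_{A_P}(k(P),M_P)\;\cong\;\Hom_{A_P}\!\left(k(P),\, H^n_{PA_P}(M_P)\right),\qquad n\ge 0.
\]
Taking $k(P)$-dimensions rewrites the left side as $\mu_n(P,M)$ and the right side as $\mu_0(P,H^n_P(M))$, which is the desired equality.

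The only delicate point I anticipate is confirming the availability of the Grothendieck spectral sequence without any Noetherian or finite-generation hypothesis on $M$; this is standard since both functors involved are left-exact on the full category of $A_P$-modules and $\Gamma_{PA_P}$ carries injectives to $\Hom_{A_P}(k(P),-)$-acyclic objects. Everything else is formal.
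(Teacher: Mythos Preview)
Your proof is correct. Note, however, that the paper does not actually supply a proof of this lemma: it simply quotes it as \cite[1.4]{Lyu-1} (Lyubeznik). Your argument via the Grothendieck spectral sequence for the composite $\Hom_{A_P}(k(P),-)=\Hom_{A_P}(k(P),\Gamma_{PA_P}(-))$ is exactly the standard proof of that result, so there is no genuine methodological difference to compare.
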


The following result is basic to all further results.
\begin{theorem}\label{L-P}
Let $A$ be a regular ring containing a field $K$ of characteristic zero. Let $G$ be a finite
group and $\Phi: G \to Gl_m(K)$ be a group homomorphism. Let $G$ acts on $R=A[\underline{\bf X}]$ linearly (fixing $A$).
Let $S$ be ring of invariants of $G$. Set $M=\f(T)=H_{I_1}^{i_1}(H_{I_2}^{i_2}(\cdots(H_{I_s}^{i_s}(T^G))\cdots))= \bigoplus_{n \in \Z}M_n$. Then for any $P \in \Spec A$ 
\[
\mu_j(P, M_t)= \mu_0(P, H_P^j(M_t)).\]
\end{theorem}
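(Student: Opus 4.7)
The overall strategy is to verify the hypothesis of Lemma \ref{ilyu}: namely, to establish that $(H^j_P(M_t))_P$ is an injective $A_P$-module for every $j \geq 0$, from which the claimed equality $\mu_j(P, M_t) = \mu_0(P, H^j_P(M_t))$ is immediate. First I would replace $A$ by $A_P$ and thereby reduce to the case where $(A, P)$ is a regular local ring, so that $H^j_P(M_t) = (H^j_P(M_t))_P$.

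Next I would pass to the completion $B = \widehat{A}$, which by Cohen's structure theorem has the form $k(P)[[Y_1, \ldots, Y_d]]$ with $K \subseteq k(P)$. The point is that all the iterated local cohomologies involved commute with the faithfully flat base change $-\otimes_A B$, and Lemma \ref{flat} identifies $B \otimes_A S$ with $(B[\underline{\bf X}])^G$. Writing $T_B^G = (B[\underline{\bf X}])^G$, these compatibilities combine to give
\[
H^j_P(M_t) \otimes_A B \;\cong\; \bigl(H^j_{P T_B^G} \circ \f\,(T_B^G)\bigr)_t.
\]
The right-hand side is the $t$-th graded component of an iterated local cohomology of $T_B^G$ of exactly the form covered by Lemma \ref{inj}, and it is annihilated by $PB = \n$, the maximal ideal of $B$. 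Hence Lemma \ref{inj} identifies it with $E_B(k(P))^{(s_{t,j})}$ for some cardinal $s_{t,j}$. Being $P$-torsion, $H^j_P(M_t)$ is unchanged by tensoring with $B$, and since the injective hulls $E_{A_P}(k(P))$ and $E_B(k(P))$ coincide as modules, injectivity over $B$ is equivalent to injectivity over $A_P$. Lemma \ref{ilyu} then yields the theorem.

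The main technical obstacle I anticipate is the identification $(H^j_{PS}(M))_t = H^j_P(M_t)$ together with its compatibility with the base change $A \to B$. Because $S$ is not standard graded and $P \subset S_0 = A$, I would justify this via the \v{C}ech complex on a generating set of $P$; since the generators sit in degree zero the \v{C}ech complex preserves the $\Z$-grading on any graded $S$-module and commutes with $-\otimes_A B$. Once this bookkeeping is in place, the reduction to Lemma \ref{inj} is clean and the remainder of the argument is routine.
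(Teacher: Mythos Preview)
Your proposal is correct and follows essentially the same route as the paper: reduce via Lemma \ref{ilyu} to showing $(H^j_P(M_t))_P$ is injective, localize at $P$ and complete, use that $P$-torsion modules are unchanged under $-\otimes_A \widehat{A_P}$ to identify $(H^j_P(M_t))_P$ with the $t$-th component of $H^j_{PS}(\f(S))\otimes_A \widehat{A_P}$, and then invoke Lemma \ref{inj} together with $E_{\widehat{A_P}}(k(P)) = E_{A_P}(k(P)) = E_A(A/P)$. Your discussion of the identification $(H^j_{PS}(M))_t = H^j_P(M_t)$ via the degree-zero \v{C}ech complex is exactly what underlies the paper's notation $V_t = H^j_P(M_t)$ for $V = H^j_{PS}(M)$, and is a welcome clarification.
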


\begin{proof}
Suffices to show that $(H_P^j(M_t))_P$ is an injective $A$-module. Set $V= H_{PS}^j(M)$ and $V_t = H_P^j(M_t)$. We note that
$(V_t)_P= V_t \otimes_A \widehat{A_P}$. Since $V_t$ is $P$-torsion so we have $V_t = (V \otimes_A \widehat{A_P})_t = 
\left(H_{PS_P}^j(M_P)\right)_t$. By Lemma \ref{inj} we have $(V_t)_P= E_{\widehat{A_P}}(k(P))^s = E_{A_P}(k(P))^s = E_A(A/P)^s$ where $s$ is an ordinal number, possibly infinite. 
The result follows from Lemma \ref{ilyu}.
\end{proof}

As an application of Theorem \ref{L-P}    we have:
\begin{corollary} \label{injdim-supp}
Let $t \in \Z$. Then $\injdim M_t \leq \dim_A M_t$.
\end{corollary}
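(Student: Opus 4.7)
The plan is to combine Theorem \ref{L-P} with Grothendieck's vanishing theorem. Recall that for any $A$-module $N$, the injective dimension can be read off the Bass numbers via
\[
\injdim_A N \;=\; \sup\{\, j \geq 0 \;:\; \mu_j(P,N)\neq 0 \ \text{for some}\ P \in \Spec A\,\}.
\]
So to prove $\injdim_A M_t \leq \dim_A M_t$, it suffices to show that $\mu_j(P,M_t)=0$ whenever $j > \dim_A M_t$ and $P$ is an arbitrary prime of $A$.

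Set $d = \dim_A M_t$. Theorem \ref{L-P} gives us the identity $\mu_j(P,M_t) = \mu_0(P, H^j_P(M_t))$ for all $P$ and all $j \geq 0$. Thus the desired vanishing will follow the moment we know $H^j_P(M_t)=0$ for $j > d$. This is precisely Grothendieck's vanishing theorem: for a Noetherian ring $A$, any ideal $I \subseteq A$, and any $A$-module $N$, one has $H^i_I(N)=0$ for all $i > \dim_A N$, where $\dim_A N$ is the Krull dimension of $\Supp_A N$ as a subspace of $\Spec A$ (the version we need, valid without a finite generation hypothesis on $N$, is cf.\ \cite{BS}). Applied with $I = P$ and $N = M_t$, this forces $H^j_P(M_t)=0$ for $j > d$, and the proof concludes.

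There is essentially no obstacle to overcome: all the work has been done in Theorem \ref{L-P}, whose reduction of a higher Bass number to a zeroth Bass number of a local cohomology module is exactly what makes the crude vanishing bound $H^j_P(-)=0$ powerful enough to control injective dimension. The only point that one should briefly verify is that Grothendieck's vanishing bound $H^i_I(N)=0$ for $i > \dim \Supp_A N$ applies to the not-necessarily-finitely-generated module $M_t$; but this version is standard.
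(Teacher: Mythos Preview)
Your proof is correct and follows essentially the same approach as the paper: apply Theorem \ref{L-P} to identify $\mu_j(P,M_t)$ with $\mu_0(P,H^j_P(M_t))$, then use Grothendieck's vanishing theorem to kill $H^j_P(M_t)$ for $j>\dim_A M_t$. Your version is slightly more explicit about the characterization of injective dimension via Bass numbers and about the applicability of Grothendieck vanishing to non-finitely-generated modules, but the argument is the same.
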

\begin{proof}
For any prime ideal $P$ of $A$ we have \[\mu_j(P, M_t)= \mu_0(P, H^j_P(M_t)).\] Moreover, by Grothendieck's Vanishing Theorem 
$H^j_P(M_t)= 0$ for all $j> \dim M_t$. So $\mu_j(P, M_t)=0$ for all $j> \dim M_t$.
\end{proof}

Another  use of our Construction \ref{std-reg} is the following 
\begin{theorem}\label{inf-gen-proof}(with hypotheses as in \ref{std-reg}). Further assume $A$ is a domain.
Let $I$ be a homogeneous ideal in $S$. Assume $I \cap A \neq 0$. If $H^i_I(S)_t \neq 0$ then
$H^i_I(S)_t$ is NOT finitely generated as an $A$-module.
\end{theorem}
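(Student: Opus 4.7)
The plan is to argue by contradiction: suppose $M_t := H^i_I(S)_t$ is nonzero and finitely generated as an $A$-module, then localize and complete $A$ at a cleverly chosen prime, and invoke Lemma~\ref{inj} to force $M_t$ to contain a nonzero injective hull over a positive-dimensional local ring, contradicting finite generation.

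First I would fix a nonzero $a \in I \cap A$. Since $M = H^i_I(S)$ is $I$-torsion, every element of $M$ is killed by a power of $a$, so $M_t$ is $a$-torsion; if $M_t$ were finitely generated over $A$, a single uniform power $a^N$ would annihilate it, and $\Supp_A M_t = V(\ann_A M_t)$ would sit inside $V(a)$. I would then choose $P$ minimal in $\Supp_A M_t$. Three things matter about $P$: it is associated to $M_t$ (minimal primes of the support of a finitely generated module are associated), so $(M_t)_P \neq 0$; it contains the nonzero element $a$, so $\hgt P \geq 1$ because $A$ is a regular domain; and by minimality $(M_t)_P$ has support $\{PA_P\}$, hence is $PA_P$-torsion.

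Next I would pass to $B = \widehat{A_P}$, which by Cohen's structure theorem is $k(P)[[Y_1,\ldots,Y_d]]$ with $d = \hgt P \geq 1$, and where (as in \ref{std-reg}) we may arrange $K \subseteq k(P)$. Lemma~\ref{flat} identifies $B \otimes_A S$ with $T^G$ for $T = B[X_1,\ldots,X_m]$, and flat base change for local cohomology (using that $A \to B$, and hence $S \to T^G$, is flat) identifies $M_t \otimes_A B$ with the $t$-th graded component of $H^i_{IT^G}(T^G)$. By faithful flatness of $A_P \to B$ combined with the previous paragraph, this module is nonzero and $\n$-torsion, where $\n$ denotes the maximal ideal of $B$.

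At this point I would invoke Lemma~\ref{inj}, applicable because $G \subseteq GL_m(K) \subseteq GL_m(k(P))$, to conclude $M_t \otimes_A B \cong E_B(k(P))^{s_t}$ with $s_t \geq 1$. Because $\dim B = d \geq 1$, $E_B(k(P))$ is not finitely generated over $B$ (it has infinite length), and consequently no nonzero direct sum of copies is either. But if $M_t$ were finitely generated over $A$, then $M_t \otimes_A B$ would be finitely generated over $B$, a contradiction. The main obstacle, as I see it, is tracking the graded-component structure through localization, completion, and flat base change for local cohomology so that Lemma~\ref{inj} may actually be applied in the reduced setup; the hypothesis $I \cap A \neq 0$ is precisely what supplies the element $a$ that locks the supporting prime $P$ into a height-at-least-one stratum of the regular domain $A$, guaranteeing $\dim B \geq 1$.
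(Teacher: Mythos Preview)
Your proof is correct, but it follows a different path than the paper's argument.

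The paper chooses a \emph{maximal} ideal $\m$ in $\Supp_A M_t$ (not a minimal one), completes at $\m$, and then runs a direct differential-operator argument on $V = M_t \otimes_A B$: since $V$ is finitely generated over $B$ and $(I\cap A)$-torsion, some nonzero $p\in\n$ annihilates $V$; choosing $p$ of minimal $\n$-order, the identity $\delta_i p = p\delta_i + \delta_i(p)$ in $D_B$ forces each $\delta_i(p)$ to annihilate $V$ as well, and one of these has strictly smaller order, contradicting minimality unless $p$ is a unit and $V=0$.

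Your approach instead selects a \emph{minimal} prime $P$ in $\Supp_A M_t$ so that $(M_t)_P$ is automatically $PA_P$-torsion, completes, and then invokes Lemma~\ref{inj} to write $M_t\otimes_A B \cong E_B(k(P))^{s_t}$ with $s_t \geq 1$; since $\dim B \geq 1$, this cannot be finitely generated. Your route is shorter and more conceptual once Lemma~\ref{inj} is in hand, while the paper's order-lowering trick is more elementary and self-contained (it does not pass through Lyubeznik's structure result underlying Lemma~\ref{inj}). The trade-off is that you needed the extra observation that minimality of $P$ supplies the $\n$-torsion hypothesis of Lemma~\ref{inj}, whereas the paper's argument works at any prime in the support.
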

\begin{proof}
Set $Q = I \cap A$. Suppose if possible $ L = H^i_I(S)_t$ is a non-zero finitely generated $A$-module.

 Let $\m$ be a maximal prime  ideal in $A$ belonging to the support of $L$. Notice as $L$ is $Q$-torsion we have $\m \supseteq Q$. Let $B = \widehat{A_\m}$ the completion of $A$ \wrt \ $\m$. We note that the image of $Q$ in $B$ is non-zero.
 Set $J = QB$.
 
 We now do the construction as in  \ref{std-reg}.
 By Cohen-structure theorem $B = K[[Y_1,\ldots, Y_g]]$ where $K = A/\m $  and $g = \hgt_A \m$.
 Let $D_B $ be the ring of $K$-linear differential operators on $B$ and set $\D = A_m(D_B)$, the $m^{th}$-Weyl algebra
 over $\Lambda$. Then by  \ref{std-reg} we get that $M = H^i_{IB}(S) $ is a graded 
 $\D^G$-module.  In particular $V = M_c$ is a $D_B$-module (since $D_B$ is a subring of $\D^G_0$).
 
 Let $V$ be generated as a $R$-module by $v_1,\cdots, v_l$. Each $v_j$ is killed by a power of $J$. It follows that there exists $n$ such that $J^n V = 0$. 
 Let $\n$ be the maximal ideal of $B$. Choose $p \in \n$ of smallest $\n$-order $s$ such that $p V = 0$. We note that for all $i = 1, \ldots, d$,
 $$\delta_i p = p \delta_i  + \delta_i(p),$$
 holds in $\Lambda$.  Notice $p \delta_i V = 0$ since $\delta_i V \subseteq V$ (as $V$ is a $D_B$-module).  So $\delta_i(p) V = 0$.
 
 We note that if $s \geq 1$ then some $\delta_i(p)$ will have $\n$-order less than $s$. It follows $s = 0$. Thus $p$ is a unit. So $V = 0$, a contradiction as we were assuming $V$ to be non-zero.
 Thus our assumption is incorrect. Therefore $ H^i_I(R)_t$ is NOT finitely generated as an $A$-module.
\end{proof}
\section{Bass numbers}
\s Let $A = k[[Y_1,\ldots, Y_d]]$ and let $\m$ be its maximal ideal. 
Let $G$ be a finite group with a group homomorphism $G \rt GL_m(k)$. Let $G$
act on $R=A[\underline{\bf X}]$ linearly (fixing $A$). Let $S=R^G$ be ring of invariants of $G$. 
Set $U =  R/\m R= k[\underline{\bf{X}}]$. Clearly $G$ acts on $U$. 
Let $N$ be an $R*G$-module. Then $N/\m N$ is a $U*G$-module. 

\begin{lemma}\label{invar-mod}
$\left(N/\m N\right)^G \cong N^G/\m N^G$. 
\end{lemma}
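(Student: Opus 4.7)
The plan is to use the Reynolds operator $\rho\colon N \to N^G$ defined by $\rho(n) = \frac{1}{|G|}\sum_{\sigma \in G}\sigma(n)$, which is a well-defined $A$-linear projection onto $N^G$ (since $|G|$ is invertible in $k \subseteq A$ and $G$ fixes $A$ pointwise, so $\mathfrak{m} \subseteq A^G$).

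First, I would write down the natural candidate map. Since $\mathfrak{m} N^G \subseteq \mathfrak{m} N$, sending an element $n \in N^G$ to its class in $N/\mathfrak{m}N$ — which is manifestly $G$-invariant — induces a well-defined $A/\mathfrak{m}$-linear map
\[
\phi\colon N^G/\mathfrak{m} N^G \longrightarrow (N/\mathfrak{m} N)^G, \qquad \overline{n} \longmapsto \overline{n}.
\]
I would then prove $\phi$ is surjective and injective separately.

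For surjectivity, take $\xi \in (N/\mathfrak{m}N)^G$ and lift to $x \in N$. For every $\sigma \in G$ we have $\sigma(x) - x \in \mathfrak{m}N$, so averaging gives
\[
\rho(x) - x = \frac{1}{|G|}\sum_{\sigma \in G}\bigl(\sigma(x) - x\bigr) \in \mathfrak{m}N.
\]
Thus $\rho(x) \in N^G$ reduces modulo $\mathfrak{m}N$ to $\xi$, which shows $\phi$ is surjective.

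For injectivity, suppose $n \in N^G \cap \mathfrak{m}N$; I must show $n \in \mathfrak{m}N^G$. Write $n = \sum_{i} a_i n_i$ with $a_i \in \mathfrak{m}$ and $n_i \in N$. Since $G$ fixes $A$ elementwise, the Reynolds operator is $A$-linear, and since $n \in N^G$ we have $\rho(n) = n$; therefore
\[
n = \rho(n) = \sum_i a_i \, \rho(n_i) \in \mathfrak{m}N^G,
\]
as each $\rho(n_i) \in N^G$. There is no substantive obstacle here: the only ingredient beyond the definitions is that $\mathfrak{m}$ lies in the invariant subring $A = A^G$, which makes the Reynolds operator $\mathfrak{m}$-compatible. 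The same argument would work for any ideal of $S$ contained in $A$.
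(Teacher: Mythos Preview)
Your proof is correct and follows essentially the same approach as the paper: both arguments construct the natural map $N^G/\mathfrak{m}N^G \to (N/\mathfrak{m}N)^G$ and verify surjectivity via the Reynolds operator exactly as you do. For injectivity the paper phrases things slightly differently---it notes that $0 \to N^G \to N \to N/N^G \to 0$ is split over $R^G$ (by the Reynolds operator) and tensors with $R^G/\mathfrak{m}R^G$---but this is the same content as your direct computation $n = \rho(n) = \sum a_i\,\rho(n_i) \in \mathfrak{m}N^G$, just packaged abstractly.
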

\begin{proof}
We have a split short exact sequence of $R^G$-modules
\[0 \to N^G \to N \to N/N^G \to 0.\] Applying $- \otimes_{R^G}R^G/\m R^G$ gives a split exact sequence \[0 \to N^G/\m N^G \overset{i}{\hookrightarrow} N/\m \to * \to 0.\] Let $\xi \in \left(N/\m N\right)^G$. Then $\xi= \overline{u}$ where $u \in N$. Now $\sigma(\xi)=\xi$ implies $\sigma(u)= u+ \sum_{i=1}^d a_i n_i(\sigma)$. Thus $\rho(u)= 1/|G| \left(\sum \sigma(u)\right)= u + \sum_{i=1}^d a_i \sum_{\sigma \in G} n_i(\sigma)$. Hence $\rho(u) \in N^G$ and $i(\overline{\rho(u)})= \overline{u}= \xi$ in $N/\m N$. So the result follows.
\end{proof} 

\s Recall $T = A[\partial_1,\ldots,\partial_m]$ with $\deg \partial_i = -1$.
We recall that $G$ acts linearly on $T$ fixing $A$.
Set $V = T/\m T = k[\partial_1,\ldots,\partial_m]$.
Furthermore if $N$ is an $T*G$-module then $N/\m N$ is a $V*G$-module and $\left(N/\m N\right)^G \cong N^G/\m N^G$. 

\s \label{const-bass-rachel} {\it Construction 3}: Let $A=k[[Y_1, \ldots, Y_d]]$ and $\n=(Y_1, \ldots, Y_d)$ be unique maximal ideal of $A$. Let $R=A[X_1, \ldots, X_m]$ be standard graded with $\deg A=0$ and $\deg X_i=1$. Let 
$G$ be a finite group with a group homomorphism 
$G \rt Gl_m(k)$. Let $G$ acts linearly on $R$ fixing $A$. Let $S = R^G$ be ring of invariants of $G$. 
Let $I_1,\ldots, I_s$ be homogeneous ideals in $S$. Set $\f = H^{i_1}_{I_1} \circ  H^{i_2}_{I_2} \circ \cdots \circ H^{i_s}_{I_s}$ and set $\f' = H^{i_1}_{I_1R} \circ  H^{i_2}_{I_2R} \circ \cdots \circ H^{i_s}_{I_sR}.$  
Set $M^*= \f(S), M=H_{\n R^G}^j\left(\f(S)\right), N^*= \f'(R), N=H_{\n R}^j\left(\f'(R)\right)$ such that $(N^*)^G \cong M^*$. Then $N$ is an $R*G$-module and $N^G \cong M$. 

Let $D_{A/k}$ be ring of $k$-linear differential operator over $k$. Set $\D_R= A_m(D_{A/k})$.
Then $G$ acts on $\D_R$ (fixing $D_{A/k}$) see \ref{diff-op-local-2}. Set $\D_R^G$ the ring of invariants of $D_k$.
Then $N$ is a $\D_R *G$-module. Set $A'=k[[Y_1, \ldots, Y_{d-1}]]$ considered as a sub-ring of $A$ and 
$R'=A'[X_1, \ldots, X_m]$. Then $D_{A'/k}\subset D_{A/k}$ and $\D_{R'}\subseteq \D_R$.

Now the map $N \overset{Y_d}{\lrt} N$ is a $\D_{R'}*G$-linear (as $\sigma(Y_d) = Y_d$ for all $\sigma \in G$). 
So $H_l(Y_d, N)$ are $D_{R'}*G$-modules for $l = 0,1$. Thus we have a short exact sequence of $D_{R'}*G$-modules
\[0 \to H_1(Y_d, N) \to N \overset{Y_d}{\lrt} N \to H_0(Y_d, N) \to 0.\] Taking invariants we get
$H_l(Y_d, M) \cong H_l(Y_d, N)^G$ for $l=0, 1$. Iterating we get that $H_l(\underline{Y}, N)$ is an $A_m(k)*G$-module
and $H_l(\underline{Y}, N)^G \cong H_l(\underline{Y}, M)$ where $\underline{Y}= Y_1, \ldots, Y_d$. We note that 
$N_i \cong E_A(k)^{s_i}$ and $M_j \cong E_A(k)^{d_j}$ for some $s_i, d_j$ (possibly infinite). So
$H_d(\underline{Y}, N)_i \cong k^{s_i}$ and $H_d(\underline{Y}, M)_j \cong k^{d_j}$. 
We also note that by Theorem \ref{L-P} we get $\mu_j(\n, M^*_n) = \mu_0(\n, M_n)$.

\begin{remark}
$H_d(\underline{Y}, N)$ is also a holonomic $A_m(k)$-module. So $H_i(\underline{X}, \widetilde{N})$ and $H_i(\underline{\partial}, \widetilde{N})$ are finite dimensional $k$-vector space for all $i$ where $\widetilde{N}= H_d(\underline{Y}, N)$. 
As $N$ is a generalized Eulerian $\D_R$-module we get that 
 $\widetilde{N}$ is a generalized Eulerian $A_m(k)$-module; see \cite[4.26]{TP2}.  So  by \cite[Theorem 1]{P2}, \cite[5.1]{PS}
\[H_i(\underline{X}, \widetilde{N})_j= 
 \left\{
 	\begin{array}{ll}
 		\text{finite dimensional } k-\text{vector space } & \mbox{if } j=0 \\
 	0 & \mbox{if }j \neq 0
 	\end{array}
 \right.\] and 
 \[H_i(\underline{\partial}, \widetilde{N})_j= 
  \left\{
  	\begin{array}{ll}
  		\text{finite dimensional } k-\text{vector space } & \mbox{if } j=-m \\
  	0 & \mbox{if }j \neq -m
  	\end{array}
  \right.\]
\end{remark}
 
\begin{lemma}
Set $c = |G|!$.
Let $f_1, \ldots, f_m \in R^G$ (and $g_1,\ldots. g_m \in T^G$ ) be a set of  fundamental (and dual) invariants of
$G$ \wrt \ $A$ (see \ref{eclair}) with $\deg f_i = c$ and $\deg g_i = -c$ for all $i$.
Set $U = R/\m R = k[X_1,\ldots, X_m]$  $L = T/ \m T = k[\partial_1,\ldots, \partial_m]$ and $D = A_m(k)$. Consider the natural linear action of $G$ on $U$,$V$ and $D$. Let
$E$ be a graded holonomic generalized Eulerian $D$-module on which $G$ acts.
Then
\begin{enumerate}[\rm (1)]
\item
$\dim_k H_i(\underline{f}, E)< \infty$.
\item
$\dim_k H_i(\underline{g}, E)< \infty$.
\item
$\dim_k H_i(\underline{f}, E^G)< \infty$.
\item
$\dim_k H_i(\underline{g}, E^G)< \infty$.
\end{enumerate}
\end{lemma}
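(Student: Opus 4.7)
The plan is to reduce each of the four statements to the fact, recalled in the remark immediately preceding the lemma, that for any graded holonomic generalized Eulerian $D$-module $E$ both $H_i(\underline{X}; E)$ and $H_i(\underline{\partial}; E)$ are concentrated in a single graded degree ($0$ and $-m$ respectively) and are finite-dimensional over $k$ there.

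For (1), I view $\underline f$ via its images in $U^G$, which by Proposition \ref{hsop} applied over the base field $k$ form a homogeneous regular sequence in $U$ of degree $c$. Hence $U/(\underline f)$ is a finite dimensional graded $k$-vector space supported in degrees $\{0,1,\ldots,(c-1)^m\}$, and one can reuse verbatim the filtration from the proof of Theorem \ref{ref-vanish-test-main} to obtain a finite descending chain
\[
W_{(c-1)^m}\subseteq \cdots \subseteq W_1\subseteq W_0=U/(\underline f)
\]
of graded $U$-submodules whose successive quotients are finite direct sums of shifts $k(-j)=(U/(\underline X))(-j)$ with $0\le j\le (c-1)^m$. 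The induced long exact sequences of $\Tor^U_*(-,E)$ then express $H_i(\underline f;E)=\Tor^U_i(U/(\underline f),E)$ as an iterated extension of finitely many shifted copies of the finite-dimensional $H_i(\underline X;E)$, and an induction on the length of the filtration finishes (1).

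For (2) I run the dual argument: replace $U$ by $L=k[\underline\partial]$ and $\underline f$ by $\underline g$. The construction in \ref{eclair} supplies the required $\underline g\in L^G$ as a homogeneous regular sequence of degree $-c$, and the analogous filtration of $L/(\underline g)$ by graded $L$-submodules with successive quotients shifted copies of $L/(\underline\partial)=k$ (the dual filtration already used in the proof of Theorem \ref{ref-vanish-test-main}) feeds into $\Tor^L_*(-,E)$ to give (2).

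For (3) and (4), since $\underline f\in U^G$ the Koszul complex $K(\underline f;E)$ is a complex of $k$-vector spaces whose differentials commute with the $G$-action (because each $f_j$ is $G$-invariant). As $|G|$ is invertible in $k$, the $G$-invariants functor is exact and hence commutes with Koszul homology, giving
\[
H_i(\underline f;E)^G\cong H_i(\underline f;E^G).
\]
The left hand side is a subspace of the finite-dimensional $H_i(\underline f;E)$ from (1) and so is itself finite-dimensional; this proves (3). Part (4) is identical after replacing $\underline f$ by $\underline g$ and (1) by (2). The only nontrivial step in the whole argument is the construction of the $U$- and $L$-submodule filtrations with shifted quotients $k(-j)$, but this has already been carried out in the proof of Theorem \ref{ref-vanish-test-main}, so I do not expect a serious obstacle.
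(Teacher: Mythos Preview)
Your proposal is correct and follows essentially the same route as the paper: for (1) and (2) the paper also filters $U/(\underline f)$ (respectively $L/(\underline g)$) by its graded pieces, reduces via the associated $\Tor$ long exact sequences to the finite-dimensional $H_i(\underline X;E)$ (respectively $H_i(\underline\partial;E)$), and for (3) and (4) it invokes the isomorphism $H_i(\underline f;E)^G\cong H_i(\underline f;E^G)$ (citing \cite[2.8]{TWP} rather than rederiving it from exactness of $(-)^G$).
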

\begin{proof}
(1) Let $U/(f)= C_0 \oplus C_1 \oplus \cdots \oplus C_{c^m}$ where
$\dim_k C_i < \infty$. Set $W_i =\langle C_i, \ldots, C_{c^m}\rangle$. Then $W_0= U/(f)$ and we have short
exact sequences \[0 \to W_{i-1} \to W_i \to \overline{C_i} \to 0\] where $C_i \cong k^{\dim_k V_i}(-i) \cong \left(T/\underline{X}T\right)^{\dim_k V_i}(-i)$ for all
 $0 \leq i \leq c^m$. By argument similar to Theorem \ref{ref-vanish-test-main} we get \[\Tor_i^R(R/(\underline{f}), \widetilde{N})_j \cong H_i(\underline{f}, \widetilde{N})_j=0 \quad \text{for } j \neq 0, 1, \ldots, c^m.\] Also the filtration yields that $\dim_k H_i(\underline{f}, \widetilde{N})_j$ is finite dimensional for all $j \in \Z$. Thus $\dim_k H_i(\underline{f}, \widetilde{N}) < \infty$.

(2) This is similar to (1).

(3) The Koszul complex $K(\underline{f}, E)$ is $G$-invariant as $\underline{f} \in U^G$. By \cite[2.8]{TWP}  we get that 
\[
H_i(\underline{f}, E^G) \cong H_i(\underline{f}, E)^G.
\]
The result follows.

(4) This is similar to (3).
\end{proof}

\s \label{canteen} \emph{Notation}:  Set $c = |G|!$. Degree of each term in a fundamental sequence is $c$ and dual fundamental sequence is $-c$. Set $\Theta_l = l +c \Z$ for $0 \leq l \leq c-1$. 
Throughout the functor $\f $ will denote 
$H_{I_1}^{i_1} \circ H_{I_2}^{i_2} \circ \cdots \circ H_{I_s}^{i_s}$ for some homogeneous ideals $I_1,\ldots, I_s$ in $S = R^G$. Set $\f' = H_{I_1R}^{i_1}\circ H_{I_2R}^{i_2} \circ \cdots \circ H_{I_sR}^{i_s}$
\begin{theorem}\label{bass-finite}(with hypotheses as in \ref{canteen}). Let $P$
be a prime ideal in $A$.
If $\mu_j(P, \f(S)_{n_0})< \infty$ for some $n_0 \in \Theta_l$  then $\mu_j(P, \f(S)_n) < \infty$ for all $n \in \Theta_l$.
\end{theorem}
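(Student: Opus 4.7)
The plan is to translate the statement into a finite-dimensionality question for the components of the holonomic generalized Eulerian $A_m(k(P))$-module produced by Construction~3, and then to propagate finiteness across the coset $\Theta_l$ using the Koszul complexes on a fundamental and a dual fundamental sequence.

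First I would localize $A$ at $P$ and complete, invoking Lemma \ref{flat} to transport the $G$-action. Writing $B=\wh{A_P}\cong k(P)[[Y_1,\ldots,Y_d]]$ with maximal ideal $\n$, Theorem \ref{L-P} together with Lemma \ref{inj} identifies $\mu_j(P,\f(S)_n)$ with $\mu_0(\n, M_n)=\dim_{k(P)}\Hom_B(k(P), M_n)$, where $M = H^j_{\n B^G}(\f(B[\underline{X}]^G))$. Thus the hypothesis becomes ``$\dim_{k(P)}\Hom_B(k(P), M_{n_0})<\infty$'' and the conclusion becomes ``$\dim_{k(P)}\Hom_B(k(P), M_n)<\infty$ for all $n\in\Theta_l$''.

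Next I would apply Construction~3 in \ref{const-bass-rachel}: set $\wt M := H_d(\underline{Y}, M) = \wt N^G$, where $\wt N = H_d(\underline{Y}, N)$ is a graded holonomic generalized Eulerian $A_m(k(P))$-module on which $G$ acts, and Construction~3 gives $\dim_{k(P)}\wt M_n$ equal to the Bass number in question. Hence the problem reduces to showing that if $\wt M_{n_0}$ is finite-dimensional over $k(P)$, then $\wt M_n$ is finite-dimensional for all $n \in \Theta_l$.

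Now choose a fundamental sequence $\underline{f}=f_1,\ldots,f_m \in k(P)[\underline{X}]^G$ of degree $c=|G|!$ (Proposition \ref{hsop}) and a dual fundamental sequence $\underline{g}=g_1,\ldots,g_m \in k(P)[\underline{\partial}]^G$ of degree $-c$ (Construction~1 in \ref{eclair}). The lemma immediately preceding \ref{canteen} ensures that both $H_0(\underline{f},\wt M)$ and $H_0(\underline{g},\wt M)$ are finite-dimensional over $k(P)$. The tails of the two Koszul complexes
\[
(\wt M_{n-c})^m \rt \wt M_n \rt H_0(\underline{f},\wt M)_n \rt 0, \qquad (\wt M_{n+c})^m \rt \wt M_n \rt H_0(\underline{g},\wt M)_n \rt 0
\]
show that finite-dimensionality of $\wt M_{n-c}$ forces that of $\wt M_n$, and symmetrically with $n+c$ in place of $n-c$. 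Starting from $n_0$ and iterating in both directions sweeps out $n_0+c\Z=\Theta_l$, giving the theorem. The main obstacle is the verification underlying Construction~3 and the finiteness lemma above: one must confirm that $\wt N$ inherits both the holonomic generalized Eulerian $A_m(k(P))$-structure and the $G$-action from $N$, and that $G$-invariants commute with Koszul homology against the $G$-invariant sequences $\underline{f}$ and $\underline{g}$ via the Reynolds-operator averaging of \cite[2.8]{TWP}; once these ingredients are in hand, the propagation step above is immediate.
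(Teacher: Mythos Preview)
Your proposal is correct and follows essentially the same route as the paper's own proof: reduce via Theorem \ref{L-P} and Construction~3 to a graded $A_m(k(P))^G$-module whose components have dimensions equal to the relevant Bass numbers, invoke the finiteness lemma on Koszul homology against a fundamental and a dual fundamental sequence, and then propagate finite-dimensionality through the coset $\Theta_l$ via the tails of the two Koszul complexes. The only cosmetic difference is that the paper picks $\underline{f},\underline{g}$ in $R^G,T^G$ before passing to $k$, whereas you pick them after; either choice works.
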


\begin{proof}
We have $\mu_j(P, \f(S)_i)= \mu_0(P, H_P^j(\f(S)_i))$. Set $M= H_{PS}^j(\f(S))$ and $N= H_{PR}^j(\f'(R))$. Clearly $N^G=M$. Localize at $P$ and complete. So we may assume that $(A, \m)$ complete and $P=\m$. 
Let $f_1, \ldots, f_m \in R^G$ (and $g_1,\ldots. g_m \in T^G$ ) be a set of  fundamental (and dual) invariants of $G$ \wrt \ $A$.  
 Then by the procedure described above \ref{const-bass-rachel} we may reduce to the case $A=k$. Notice $M_j^* \cong k^{s_j}$ where $s_j= \mu_j(P, M)$ where $s_j$ is some ordinal. 

We have $H_i( \underline{f}, M^*)$ is finite dimensional $k$-vector space for all $i$.  Consider the strand of the Koszul complex $\K(\underline{f}, M^*)$,\[(M_{n_0}^*)^m \to M_{n_0+c}^* \to 0\] Thus we get the short exact sequence \[(M_{n_0}^*)^m \to M_{n_0+c}^* \to H_0(\underline{f}, M^*)_{n_0+c} \to 0.\] Thus $\dim_k M_{n_0+c}^* < \infty$. Iterating we get $\dim_k M_{n_0+tc}^*< \infty$ for all $t \geq 0$. We now look at the dual fundamental invariants of $G$. We have $\dim_k H_i(\underline{g}, M^*)< \infty$. Moreover, we have the short exact sequence \[(M_{n}^*)^m \to M_{n-c}^* \to H_0(\underline{g}, M^*)_{n-c} \to 0.\]  Thus an easy induction yields $\dim_k M_{n_0+tc}^*< \infty$ for all $t \leq 0$. So $\dim_k M^*_n< \infty$ for all $n \in \Theta_l$. The result follows.
\end{proof}

\begin{theorem}\label{basspoly}
If $\mu_j(P, \f(S)_n)< \infty$ for all $n \in \Theta_l$, then there exists $\alpha(X)$, $\beta(X) \in Q[X]$ of degree $\leq m-1$ such that 
\begin{enumerate}[\rm (1)]
\item $\mu_j(P, \f(S)_n)=\alpha(t)$ for all $t \gg 0$ where $n = l + ct$.
\item $\mu_j(P, \f(S)_n)= \beta(t)$ for all $t \ll 0$ where $n = l + ct$.
\end{enumerate}
\end{theorem}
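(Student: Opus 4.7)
The plan is to reduce to the case where $A$ is a field and then to extract polynomial behaviour on the coset $\Theta_l$ from a Hilbert--Serre argument applied to the fundamental (resp.\ dual fundamental) invariants.

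I would first use Theorem~\ref{L-P}, which gives $\mu_j(P,\f(S)_n)=\mu_0(P,H^j_P(\f(S))_n)$, to localise at $P$ and complete, so that $(A,\m)$ becomes a complete regular local ring with residue field $k$ and $P=\m$. I would then apply the construction \ref{const-bass-rachel}, iterating the Koszul complex with respect to a regular system of parameters $Y_1,\ldots,Y_d$ of $\m$, to produce a graded holonomic generalized Eulerian $A_m(k)*G$-module $\widetilde{N}$ whose invariant part $V:=\widetilde{N}^G$ satisfies $\dim_k V_n=\mu_j(P,\f(S)_n)$ for every $n\in\Z$. By hypothesis each $V_n$ with $n\in\Theta_l$ is then finite-dimensional over $k$.

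Next I would take $f_1,\ldots,f_m\in S$ a set of fundamental invariants of degree $c=|G|!$ and $g_1,\ldots,g_m\in T^G$ a set of dual fundamental invariants of degree $-c$, as in \ref{eclair}. The lemma preceding \ref{canteen} then guarantees that $\dim_k H_i(\underline{f},V)$ and $\dim_k H_i(\underline{g},V)$ are finite for every $i$. Because every $f_i$ and $g_i$ has degree a multiple of $c$, the Koszul differentials preserve cosets of $c\Z$; hence the coset piece $V^{(l)}:=\bigoplus_{t\in\Z}V_{l+ct}$ satisfies $H_i(\underline{f},V^{(l)})=H_i(\underline{f},V)^{(l)}$, which is again finite-dimensional over $k$, and similarly for $\underline{g}$.

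To establish (1), I would use that finite-dimensionality of $H_0(\underline{f},V^{(l)})=V^{(l)}/\underline{f}V^{(l)}$ forces $V^{(l)}_t=\sum_{i=1}^m f_iV^{(l)}_{t-1}$ for all sufficiently large $t$. Hence some tail $V^{(l)}_{\geq T_+}$ is a finitely generated graded module over the polynomial ring $k[f_1,\ldots,f_m]$ (viewed with the $f_i$ of $t$-degree $1$), each of whose graded pieces is finite-dimensional. The Hilbert--Serre theorem will then deliver $\alpha\in\QQ[X]$ of degree $\leq m-1$ with $\dim_k V^{(l)}_t=\alpha(t)$ for $t\gg 0$. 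A completely symmetric argument using $\underline{g}$ (which decreases the $t$-grading by $1$) will yield $\beta\in\QQ[X]$ of degree $\leq m-1$ with $\dim_k V^{(l)}_t=\beta(t)$ for $t\ll 0$, proving (2). The main obstacle is the passage from global finite-dimensional Koszul homology to the coset-level statement: it rests on both the holonomic generalized Eulerian structure of $\widetilde{N}$ (so the lemma applies) and on $\deg f_i=c=-\deg g_i$ being divisible by the period (so the Koszul differentials respect the cosets modulo $c$). Once these are in place, the degree bound $m-1=\dim k[f_1,\ldots,f_m]-1$ is standard.
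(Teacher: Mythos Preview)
Your proposal is correct and follows the paper's reduction exactly: localise at $P$, complete, apply Construction~\ref{const-bass-rachel} to replace $\mu_j(P,\f(S)_n)$ by $\dim_k M^*_n$ for a graded $A_m(k)^G$-module $M^*$, and then use the fundamental invariants $\underline{f}$ (resp.\ dual invariants $\underline{g}$) together with the finite-dimensionality of their Koszul homology on $M^*$.

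The only difference is in the final extraction of polynomial growth. The paper argues via the Euler characteristic of the degree-$n$ strand of $\K(\underline{f},M^*)$: setting $\psi(t)=\dim_k M^*_{l+ct}$, the alternating sum $\sum_{i=0}^m(-1)^i\binom{m}{i}\psi(t-i)$ equals the alternating sum of the Koszul homology dimensions in that degree, hence vanishes for $t\gg 0$; this is precisely $\Delta^m\psi$ eventually zero, giving a polynomial of degree $\le m-1$. You instead observe that finite-dimensionality of $H_0(\underline{f},V^{(l)})$ forces the tail $V^{(l)}_{\ge T_+}$ to be generated in a single degree over $k[f_1,\ldots,f_m]$ (a genuine polynomial ring since $\underline{f}$ is a regular sequence), and then invoke Hilbert--Serre. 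Your route uses only $H_0$ and a classical black box; the paper's route uses all $H_i$ simultaneously but is a one-line difference-operator computation needing no auxiliary finite-generation step. Both give the same degree bound $m-1$ for the same reason.
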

\begin{proof}
By our construction \ref{const-bass-rachel} we may reduce to $M^*$ a $A_m(k)^G$-module such that $\dim_k M_n^*= \mu_j(P, \f(S)_n)$ for all $n \in \Z$. Now $\K(\underline{f}, M^*)$ is a complex with finite dimensional homology. Note that $\deg f_i= c$ for all $i$. We have \[\K(\underline{f}, M^*): 0 \to M^*_{n-dc} \to \left(M^*_{n-(d-1)c}\right)^{\binom{m}{m-1}} \to \cdots \to \left(M^*_{n-c}\right)^{\binom{m}{1}} \to M_n^* \to 0.\] By taking $n \in \Theta_i$ we get the function $\psi(t)= \dim_k M_{n+ct}^*$ for $t \geq 0$. Then we get $\Delta^m \psi(t)$ vanish for all but finitely many $t \geq 0$. it follows that $\psi(t)$ is a polynomial type of degree $\leq m-1$. Thus $(1)$ follows.

$(2)$ follows by considering dual invariants and a similar proof.
\end{proof}

\section{Associate primes}
Let $E= \bigoplus_{n \in \Z}E_n$ be a graded module over a graded ring $S= \bigoplus_{n \geq 0}S_n$ (not necessarily standaed graded). For associated primes we ask 
\begin{enumerate}[\rm 1)]
\item Is $\bigcup_{n \in \Z}\Ass_{S_0} E_n$ is finite?
\item Does there exists $v>0$  and $n_0, n_0'$ such that if $\Theta_l = l + v \Z$ for some $0 \leq i \leq v-1$ then 
\begin{enumerate}[\rm i)]
\item $\Ass_{S_0} E_n = \Ass_{S_0} E_{n_0}$ for all $n \in \Theta_l$ and $n \geq n_0$?
\item $\Ass_{S_0} E_n = \Ass_{S_0} E_{n'_0}$ for all $n \in \Theta_l$ and $n \leq n'_0$?
\end{enumerate}
\end{enumerate}
We call $(2)$; {\it periodic stability} of associated primes of graded components  of $E$ \wrt \ $S_0$.
The following result implies Theorem \ref{ass-primes}.
\begin{theorem}\label{ass-primes-sect}
Assume either $(A, \m)$ is a regular local ring or is a smooth affine variety. Let $R=A[\underline{X}]$ be standard graded with $\deg A=0$ and $\deg X_i=1$ for all $i$. Let $S$ be the ring of invariants of $G$. Then for any $\f= H_{I_1}^{i_1} \circ \cdots \circ H_{I_s}^{i_s}$
\begin{enumerate}[\rm 1)]
\item $\bigcup_{i \in \Z} \Ass_A \f(S)_i$ is a finite set.
\item There exists periodic stability of associated primes of  graded components of $\f(S)$.
\end{enumerate}
\end{theorem}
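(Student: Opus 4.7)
The plan is to transfer the problem from the invariant ring $S$ to $R$ using the $G$-action. Set $N := \f'(R) = H_{I_1R}^{i_1}\circ\cdots\circ H_{I_sR}^{i_s}(R)$; by the material of Section~3 together with the construction in \ref{std-reg}, $N$ is a graded $\D*G$-module and $N^G \cong \f(S) =: M$ as graded $\D^G$-modules. Since each $M_n = (N_n)^G$ is an $A$-submodule of $N_n$, every associated prime of $M_n$ is realised by some element $x \in M_n \subseteq N_n$ whose $A$-annihilator is the same whether computed in $M_n$ or in $N_n$; hence
\[
\Ass_A M_n \subseteq \Ass_A N_n \qquad \text{for all } n \in \Z.
\]
Part~(1) therefore reduces to the finiteness of $\bigcup_{n\in \Z}\Ass_A N_n$.

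For this reduced statement we invoke the finiteness of associated primes of graded components of iterated local cohomology over $R=A[X_1,\ldots,X_m]$, which is the corresponding result proved in \cite{TP2} and \cite{TPSR1} under precisely the present hypotheses: $(A,\m)$ regular local or $A$ a smooth affine variety. The essential input is that each graded component $N_n$ inherits a $D_{A/K}$-module structure from the $\D^G$-action on $N$ (as $D_{A/K} \hookrightarrow (\D^G)_0$) and that $N$ is a generalised Eulerian $A_m(A)$-module by \cite[Theorem~3.6]{TP2}; the Lyubeznik-type argument (using simplicity of $D_{A/K}$ for regular local $A$, or coherence of $D_{X/K}$ for smooth affine $A$) then yields that $\bigcup_n \Ass_A N_n$ is a finite set.

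For part~(2) we combine part~(1) with Theorems \ref{bass-finite} and \ref{basspoly} at $j=0$. Set $c=|G|!$, fix a coset $\Theta_l = l + c\Z$, and let $\mathcal{P}$ be the finite set of primes from part~(1). Recall $P \in \Ass_A M_n$ iff $\mu_0(P, M_n) \neq 0$. For each $P \in \mathcal{P}$, Theorem \ref{bass-finite} supplies a dichotomy on $\Theta_l$: either $\mu_0(P, M_n) = \infty$ throughout (so $P \in \Ass_A M_n$ for every $n \in \Theta_l$), or $\mu_0(P, M_n)<\infty$ throughout, in which case Theorem \ref{basspoly} provides polynomials $\alpha_P,\beta_P \in \Q[X]$ with $\mu_0(P, M_{l+ct})=\alpha_P(t)$ for $t\gg 0$ and $=\beta_P(t)$ for $t\ll 0$. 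Each of $\alpha_P,\beta_P$ is either identically zero (so $P$ eventually leaves the associated primes on that side) or eventually nonzero (so $P$ lies in every $\Ass_A M_n$ on that side, for $|t|$ large). Choosing $t_0$ large and $t_0'$ negative enough that these dichotomies are decided simultaneously for all $P$ in the finite set $\mathcal{P}$ yields integers $n_0, n_0'$ with $\Ass_A M_n = \Ass_A M_{n_0}$ for all $n \in \Theta_l,\ n \geq n_0$ and $\Ass_A M_n = \Ass_A M_{n_0'}$ for all $n \in \Theta_l,\ n \leq n_0'$; this is exactly the periodic stability asserted.

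The hard part will be part~(1): although the reduction $\Ass_A M_n \subseteq \Ass_A N_n$ is cheap, one must verify that the finiteness proofs of \cite{TP2,TPSR1} --- originally written for graded components of $H^i_I(R)$ with homogeneous ideals $I$ of $R=A[X_1,\ldots,X_m]$ --- propagate through the iterated cohomology composition $\f'$ over arbitrary ideals $I_jR$ pulled back from the invariant ring $S$. This amounts to checking that the $D_{A/K}$-module analysis on each graded component and the generalised Eulerian structure both survive the full composition (rather than just the $R_+$-special case), which is essentially formal given \cite[Theorem~3.6]{TP2} but must be set up with care.
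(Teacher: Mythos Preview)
Your proposal is correct and follows essentially the same route as the paper: reduce part~(1) to the known finiteness for $\f'(R)$ over $R=A[\underline{X}]$ via the inclusion $\Ass_A \f(S)_n \subseteq \Ass_A \f'(R)_n$, and deduce part~(2) from part~(1) together with Theorems~\ref{bass-finite} and~\ref{basspoly} at $j=0$. Two minor remarks: the paper obtains the inclusion more directly by noting that the Reynolds operator makes $\f(S)_n$ an $A$-direct summand of $\f'(R)_n$ (your submodule argument is of course also valid), and it simply cites \cite[12.4]{TP2}, which already treats the iterated functor $\f'$, so your closing worry about propagating the finiteness result through compositions is unnecessary.
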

\begin{proof}
1)
Set $\f ' = H_{I_1R}^{i_1}\circ H_{I_2R}^{i_2} \circ \cdots \circ H_{I_sR}^{i_s}$
 We have the split exact sequence 
\[
\xymatrix{
0 \ar[r]& \f(S) \ar@{^{(}->}[r]^i & \f'(R)  \ar@/^1pc/[l]^\rho \ar[r] & \f'(R)/\f(S) \ar[r] & 0}\]
of $S=R^G$-modules. Then $\f(S)_n$ is an $A$-summand of $\f'(R)_n$. By our earlier result
 $\bigcup_{n \in \Z} \Ass_A \f'(R)_n$ is a finite set \cite[12.4]{TP2}. 
 So $\bigcup_{n \in \Z} \Ass_A \f(S)_n$ is also a finite set.
 
 2) Set $c = |G|!$. Then $c, -c$ is the common degree of \emph{a} both fundamental and dual invariants respectively of
 $G$ \wrt $A$. Set $\Theta_l = l +c \Z$ where $0 \leq j \leq c-1$. Fix $j$. Let $P \in \bigcup_{n \in \Z} \Ass_A \f(S)_n$. 
 Then $P \in \Ass_A \f(S)_n$ if and only if $\mu_0(P, \f(S)_n)> 0$. Let $n \in \Theta_l$. Then $n= l +ct$ for some $t \in \Z$. Now by Proposition \ref{basspoly} we know that 
either $\mu_0(P, \f(S)_n)$ is infinite for all $n \in \Theta_l$ or
there exits $\alpha(Z), \beta(Z) \in \QQ[Z]$ such that 
\[\mu_0(P, \f(S)_n)= 
  \left\{
  	\begin{array}{ll}
  		\alpha(t) & \mbox{if } t \gg 0 \\
  	\beta(t) & \mbox{if } t \ll 0
  	\end{array}
  \right.\]
The result follows.
\end{proof}

\section*{Acknowledgements}
The author thanks Ms. Sudeshna Roy for help in typing this paper and for many useful comments.

\end{document}